\theoremstyle{plain}
\newtheorem{theorem}{Theorem}
\newtheorem{lemma}{Lemma}
\theoremstyle{definition}
\newtheorem{definition}{{\bf Definition}}}
\theoremstyle{definition}
\theoremstyle{definition}
\newtheorem{remark}{Remark}
\newcounter{nw}
\newcounter{nm}
\def\RR{{\Bbb R}}
\def\mod{\theta(A)}
\def\R{\mathbb{R}}
\def\prx{\frac{\partial }{\partial x}}
\def\pry{\frac{\partial }{\partial y}}
\DeclareMathOperator{\im}{\mathrm{Im}}
\DeclareMathOperator{\re}{\mathrm{Re}}
\DeclareMathOperator{\sn}{\mathrm{sn}}
\DeclareMathOperator{\cn}{\mathrm{cn}}
\def\ZM{{\bf M}}
\def\ZG{{\bf G}}
\begin{document}

\title[Double-periodic maximal surfaces]{Double-periodic maximal surfaces with
singularities}

%
% Information for first author
%

\author{Sergienko Vladimir V.}

\email{}
\thanks{The first author was supported by grant INTAS No.10170}

%
% Information for second author
%
\author{Tkachev Vladimir G.}
% Address of record for the research reported here

% \thanks will become a 1st page footnote.
\thanks{The second author was supported by grant INTAS No.10170 and
Ministerstvo Vyshego Obrazovaniya Rossii N 97-0-1.3.-114.}

%
%  Math Subject Classifications
%
\subjclass{Primary 53C42, 49Q05; Secondary 53A35}

%
% Key words
%
\keywords{Maximal surfaces, two-periodic maximal surfaces, singularities}

\begin{abstract}
We construct and study a family of double-periodic almost entire solutions of the maximal surface equation. The solutions are parameterized by a submanifold of $3\times 3$-matrices (the so-called generating matrices). We show that the  constructed solutions are either space-like or of mixed type with the light-cone type isolated singularities.
\end{abstract}

\maketitle

\section{Introduction}

We consider the following equation
\begin{equation}
(1-u_y{}^2)u_{xx}+2u_xu_yu_{xy}+(1-u_x^2)u_{yy}=0.
\label{eq1}
\end{equation}
It is well known then that the graph $z=u(x,y)$ is a zero mean curvature surface in  Minkowski space
$\RR^3_1(x,y,z)$ equipped with the indefinite metric
$$
ds^2=dx^2+dy^2-dz^2.
$$
The graph $M$ in $\RR^3_1(x,y,z)$ given by $z=u(x,y)$ is called  {\it space-like} at the point $(x,y)$ if
\begin{equation}
|\nabla u(x,y)|<1.
\label{eq2}
\end{equation}
For a space-like solution, eq. (\ref{eq1}) is can be written in the divergence equation
\begin{equation}
{\rm div}\frac{\nabla u}{\sqrt{1-|\nabla u|^2}}=0,
\label{eq3}
\end{equation}
called also the maximal surface equation (notice that (\ref{eq3}) is well defined for any dimension).
It is well known that \textit{entire} space-like solutions are trivial.

\begin{theorem}[\cite{Cal}, \cite{ChJ}]
The only entire  space-like $C^2$-regular solutions to (\ref{eq1}) are
affine functions $u(x,y)=ax+by+c$.
\label{th0}
\end{theorem}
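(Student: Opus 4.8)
\medskip
\noindent\textbf{Proof proposal.}
The plan is to reduce the statement to the classical Bernstein theorem for \emph{minimal} graphs by passing to a conjugate solution. Let $u$ be an entire $C^2$ solution of (\ref{eq1}) that is space-like everywhere, i.e.\ $|\nabla u|<1$ on all of $\RR^2$, and set $W=\sqrt{1-|\nabla u|^2}>0$, so that the divergence form (\ref{eq3}) holds. First I would observe that the $1$-form
\begin{equation*}
\varpi:=-\frac{u_y}{W}\,dx+\frac{u_x}{W}\,dy
\end{equation*}
has $C^1$ coefficients on $\RR^2$ and is closed, since $d\varpi={\rm div}\,\frac{\nabla u}{W}\,dx\wedge dy=0$ by (\ref{eq3}). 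As $\RR^2$ is simply connected, $\varpi=dv$ for some $v\in C^2(\RR^2)$; that is, $v_x=-u_y/W$ and $v_y=u_x/W$.

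The computation that makes the reduction work is
\begin{equation*}
1+|\nabla v|^2=1+\frac{u_x^2+u_y^2}{W^2}=\frac1{W^2},
\end{equation*}
whence $\sqrt{1+|\nabla v|^2}=1/W$ and therefore
\begin{equation*}
\frac{\nabla v}{\sqrt{1+|\nabla v|^2}}=W\,\nabla v=(-u_y,\,u_x).
\end{equation*}
The right-hand side is divergence-free outright, its divergence being $-u_{yx}+u_{xy}=0$, so $v$ is an \emph{entire} $C^2$ solution of the minimal surface equation. By the classical Bernstein theorem for entire minimal graphs, $v$ must be affine, $v(x,y)=\alpha x+\beta y+\gamma$.

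Finally I would run the correspondence backwards. Since $\nabla v\equiv(\alpha,\beta)$, the quantity $W=1/\sqrt{1+\alpha^2+\beta^2}$ equals a constant $W_0$, and hence $u_x=W_0 v_y=W_0\beta$ and $u_y=-W_0 v_x=-W_0\alpha$ are constant; so $u$ is affine, which is the assertion of Theorem~\ref{th0}. I do not anticipate a serious obstacle: the only substantive ingredient is the minimal-surface Bernstein theorem, and the one point needing care is that the conjugate $v$ is globally defined and of class $C^2$ --- this is exactly where the space-like hypothesis enters, through the divergence form (\ref{eq3}) together with the strict positivity of $W$. If a self-contained argument were wanted in the spirit of Calabi's original proof, one would instead put isothermal coordinates on the induced metric (Riemannian precisely because the graph is space-like), use that the coordinate functions $x$, $y$, $u$ are then harmonic and that the Gauss map takes values in a hyperbolic disk to manufacture a bounded holomorphic function on the surface, and finish by Liouville once the surface is known to be conformally the whole plane; in that approach the delicate step is this last parabolicity claim, which is genuinely harder than for minimal graphs because an entire space-like maximal graph need not a priori be complete.
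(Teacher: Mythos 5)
Your argument is correct and complete. Note first that the paper itself offers no proof of Theorem~\ref{th0}: it is quoted as a known result with references to Calabi and to Cheng--Yau, so there is no in-text argument to compare against. What you have written is the standard Calabi duality in dimension two, and it works exactly as you say: the form $\varpi$ has $C^1$ coefficients because $W=\sqrt{1-|\nabla u|^2}$ is $C^1$ and strictly positive on all of $\RR^2$ (this is the one place the global space-likeness is indispensable), closedness of $\varpi$ is precisely the divergence form (\ref{eq3}), and the identity $1+|\nabla v|^2=1/W^2$ converts the maximal surface equation for $u$ into the minimal surface equation for the conjugate potential $v$, since $W\nabla v=(-u_y,u_x)$ is divergence-free by equality of mixed partials. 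The classical Bernstein theorem then forces $v$, hence $W$, hence $\nabla u$, to be constant, and the back-substitution is consistent ($|\nabla u|^2=(\alpha^2+\beta^2)/(1+\alpha^2+\beta^2)<1$). This reduction is in fact close in spirit to Calabi's original treatment of the two-dimensional case, so you are not merely giving "a" proof but essentially the cited one; Cheng--Yau's contribution is the extension to all dimensions, where no such conjugation to Bernstein is available and one needs the a priori gradient/curvature estimates. Your closing remark about the alternative conformal-parametrization route is also accurate, including the caveat that parabolicity (equivalently, completeness of the induced metric) is the genuinely delicate step there; for the purposes of this theorem the duality argument you chose avoids that issue entirely and is the cleaner path.
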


On the other hand there are nontrivial (space-like) solutions  which are regular and well-defined everywhere in $\R^{2}$ except possibly for a set of isolated where the absolute value of the gradient attains its maximum value: $|\nabla u|=1$. The classical example is the  maximal catenoid $(\sinh z)^2=x^2+y^2$ in $\R^3_1$ (see Figure~\ref{fig:cat}).

%\vspace*{-1cm}
\begin{figure}[h]
\includegraphics[height=0.36\textheight,angle=270,keepaspectratio=true]{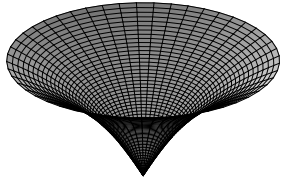}
\vspace*{-1.4cm}
\caption{The maximal catenoid $(\sinh z)^2=x^2+y^2$} \label{fig:cat}
\end{figure}

We call $u(x,y)$  an \textit{almost entire} solution of
(\ref{eq1}) if  $u$ is a continuous function in $\mathbb{R}^2$ and it is $C^2$-regular everywhere in $\mathbb{R}^2$ outside the set
\begin{equation}\label{SSI}
\Sigma_u= \{(x,y): |\nabla u(x,y)|=1\}
\end{equation}
which consists of isolated points only. We point out that we do not impose other constrains like, for example, the space-likeness of $u$. A solution of (\ref{eq1}) is called of the {\it mixed} type if  $|\nabla u|-1$ assumes both signs.

If $u(x,y)$ is a space-like solution in the punctured neighborhood  $\R^2\setminus\{a\}$, $a\in\R^2$,
then it follows from the results\footnote{In fact, the same property holds in any dimension $\R^n$, $n\ge 2$.} due to K.~Ecker \cite{Eck}, V.~Klyachin and V.~Miklyukov \cite{KM1}
that in a neighborhood of $a$ the function $u(x,y)$ asymptotically behaves
like the light cone, there is $\varepsilon^2 = 1$ such that
\begin{equation}
u(x)=u(a)+\varepsilon |x-a|+o(|x-a|), \quad x\rightarrow a.
\label{eq4}
\end{equation}
It would be interesting to study if the last property true for solutions of the mixed
type. In contrast to maximal surfaces,  two-dimensional minimal surfaces in Euclidean space has no isolated singularities in the following sense. If a minimal surface is a graph of a bounded $C^2$-function in a punctured neighborhood then it can be extended to a $C^2$-function, actually, to an analytic one in the whole neighborhood (see \cite{Nit}, \cite{DeGSt}).

Many known properties of the space-like solutions are basically due to the existence of an analogue of the classic Enneper-Weierstrass representation for maximal surfaces. Unfortunately, an analog of the Enneper-Weierstrass representation is impossible for general solutions of (\ref{eq1}). Indeed,  the function
\begin{equation}
u(x,y)=x+h(y)
\label{eq5}
\end{equation}
with $h\in C^2(\RR)$ (but not $C^3$, say) is an entire but non-analytic solution of (\ref{eq1}).
Observe also that in this case we have $|\nabla u|^2 = 1+h'{}^2(y) \geq 1$ everywhere in $\RR^2$.

In this paper we develop a non-parametric method for constructing almost entire solutions to (\ref{eq1}). This method allows us
also to construct solutions of mixed type. More precisely, we obtain a family of doubly-periodic real analytic almost entire solutions, i.e. the solutions satisfying the periodicity condition
$$
u(x+\tau_1n, y+\tau_2m)=u(x,y), \quad n,m\in \mathbb{Z}
$$
for some positive $\tau_1$, $\tau_2$. It is interesting to notice that the our examples have the same light-cone behavior at singular points (where the gradient of the solution has the unit length) even if the singular points have the mixed type. In Section~\ref{raz5} we discuss the structure of the level-sets  $G_u=\{(x,y):|\nabla u(x,y)|=1\}$ in more detail.

We start with two following observations.

\medskip
(i) The first property provides a complete classification of all solutions to (\ref{eq1}) with harmonic level sets and was announced in \cite{SerTk}.

\begin{theorem}\label{th_one}
Let $u=F\circ\phi$ be a solution of (\ref{eq1}), $\phi(x,y)$ be a harmonic function
and $F(t)$ be a twice differentiable function of one variable. Then
$$
\phi(x,y)={\rm Re\,}\int_{}^{}\frac{d\zeta}{g(\zeta)},
$$
where a holomorphic function $g(\zeta)$ is one of the following:
\begin{enumerate}
\item[(i)] $g(\zeta)=a\zeta+c$,
\item[(ii)] $g(\zeta)=a{\rm e}^{b\zeta}$,
\item[(iii)] $g(\zeta)=a\sin(b\zeta+c)$.
\end{enumerate}
Here $\zeta=x+iy\in \mathbb{C}$ and $a^2,b^2\in\RR, \  c\in \mathbb{C}.$
Moreover, in this case the function $F(\phi)$ is found from the equation
$$
F''(\phi)-\frac{{\rm Re}\, g'(\zeta)}{|g(\zeta)|^2}F'^3(\phi)=0.
$$

\end{theorem}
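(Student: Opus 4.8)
The plan is to substitute the ansatz $u = F\circ\phi$ into equation (\ref{eq1}) and exploit the harmonicity of $\phi$ to reduce the PDE to a tractable condition on $\phi$ alone. Writing $p = \phi_x$, $q = \phi_y$, the chain rule gives $u_x = F'p$, $u_y = F'q$, and $u_{xx} = F''p^2 + F'\phi_{xx}$, etc. Since $\phi$ is harmonic, $\phi_{xx} + \phi_{yy} = 0$, so the $F'$-terms coming from $u_{xx}+u_{yy}$ collapse. Plugging everything into (\ref{eq1}) and collecting the $F''$ and $F'^2 F''$ (cubic) contributions, I expect the equation to separate into the form
\begin{equation*}
F''(\phi)\,A(x,y) + F'^3(\phi)\,B(x,y) = 0,
\end{equation*}
where $A = p^2+q^2 = |\nabla\phi|^2$ and $B$ is a differential expression in $\phi$. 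Since the first factor in each term depends only on $\phi$ while $A, B$ are functions of $(x,y)$, and $\phi$ is non-constant, the only way this identity can hold identically is that $B/A$ is itself a function of $\phi$; this is the separation-of-variables mechanism that will force strong constraints.

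The next step is to identify $B$ explicitly and rewrite the condition in complex form. With $\zeta = x+iy$ and $\phi = \re w(\zeta)$ for a holomorphic $w$, one has $|\nabla\phi|^2 = |w'(\zeta)|^2$, and the expression $B$ should turn out to be (a constant multiple of) $\re w''/|w'|^2$ or something that simplifies under the substitution $w' = 1/g$; this is exactly what produces the final displayed ODE $F'' - \dfrac{\re g'}{|g|^2}F'^3 = 0$ once we set $\phi = \re\int d\zeta/g(\zeta)$, so $w' = 1/g$. The heart of the matter is then: the ratio $\re g'(\zeta)/|g(\zeta)|^2$, viewed as a function of $x,y$, must be expressible as a function of $\phi(x,y) = \re\int d\zeta/g$. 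Differentiating this functional dependence (the gradient of $\re g'/|g|^2$ must be parallel to $\nabla\phi$, equivalently the Jacobian of the pair vanishes) yields a second-order holomorphic ODE for $g$. I anticipate it takes the shape $g g'' = \lambda (g')^2 + \mu$ or, after one manipulation, $(g')^2 = \alpha g^2 + \beta g + \gamma$ type relations; solving that ODE gives precisely the three cases: $\gamma$-quadratic with distinct behavior according to whether $g$ is affine (case (i)), exponential (case (ii)), or trigonometric/sine (case (iii)), with the stated reality constraints $a^2, b^2 \in \RR$ arising from requiring $\phi$ real-valued and $u$ well defined.

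The main obstacle I foresee is the computation and clean reduction of $B(x,y)$ and, more delicately, justifying the reality conditions $a^2,b^2\in\RR$ and the normalization of the constants — i.e. showing that the three listed forms are not merely special solutions of the ODE for $g$ but genuinely exhaust all cases up to the ambiguities (affine reparis of $\zeta$, additive constants in $\phi$) that leave the class $u = F\circ\phi$ invariant. One has to be careful that "$g$ solves such-and-such ODE" is equivalent to "$\re g'/|g|^2$ is a function of $\phi$", since the latter is a single scalar condition while the ODE for $g$ is complex; matching the real dimension counts, and handling the locus where $g' = 0$ or $g = 0$ (which is where the singular set $\Sigma_u$ will sit), is where the argument needs genuine care rather than routine calculation. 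Once $g$ is pinned down, recovering $F$ is immediate: the relation is a first-order ODE for $F'$ after one integration (it is separable in $F'$), giving $F$ by quadrature, which matches the final display in the statement.
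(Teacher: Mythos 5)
Your plan matches the paper's proof step for step: substituting $u=F\circ\phi$ gives $AF''+BF'+CF'^3=0$ with $B=\Delta\phi=0$ by harmonicity, writing $\phi=\re h$ and $g=1/h'$ reduces everything to the requirement that $\re g'/|g|^2$ be a function of $\phi$, and the vanishing of the Jacobian of that pair yields precisely the ODE $gg''-(g')^2=c$, whose solutions are the three listed families. The dimension-count worry you flag is resolved exactly as the paper does it: the Jacobian condition reads $\im\bigl(gg''-(g')^2\bigr)\equiv 0$ on an open set, and a holomorphic function with identically vanishing imaginary part there is a real constant, so the single real scalar condition does determine the full complex ODE.
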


The proof of the theorem is outlined in Section~\ref{the1} below (see \cite{SerTk} for more detailed discussion).
We comment briefly the mentioned in the theorem alternatives. One can readily verify that
the first two alternatives yield the well-known classical examples of a plane, a maximal catenoid,
a helicoid and various analogues of the Scherk's surface in $\mathbb{R}^3_1$.

\vspace*{-1.4cm}
\begin{figure}[h]
\includegraphics[height=0.36\textheight,angle=270,keepaspectratio=true]{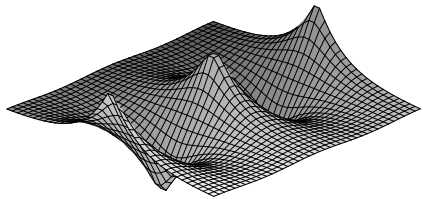}
\vspace*{-1cm}
\caption{A single-periodic surface (\ref{oneper}) for $k=\frac{4}{5}$} \label{fig:oneper}
\end{figure}

More interesting is alternative (iii) that provides for $a,b\in\RR$  a new class of single-periodic (uniformly bounded)
space-like solutions (see Figure~\ref{fig:oneper})
\begin{equation}
{\rm sn}\biggl(\frac{z}{k'};k\biggr)=\frac{\sin x}{\cosh y}.
\label{oneper}
\end{equation}
Here $k\in(0,1)$, $k'=\sqrt{1-k^2}$ and $\sn(t;k)$ denotes the elliptic Jacobi
sinus (see also Section~\ref{raz6} for more details). In fact, this single-periodic solution is a limit cases of the
double-periodic solutions discussed in Section~\ref{sec2per}.

(ii) Another example is the family
\begin{equation}
\sin z=\alpha \sin \frac{x}{\sqrt{\alpha}}+
(1-\alpha) \sin \frac{y}{\sqrt{1-\alpha}}, \qquad \alpha\in(0;1).
\label{sinsin}
\end{equation}
One can easily verify that the above functions are double-periodic solutions of (\ref{eq1}).

\vspace*{-0.7cm}
\begin{figure}[h]
\includegraphics[height=0.36\textheight,angle=270,keepaspectratio=true]{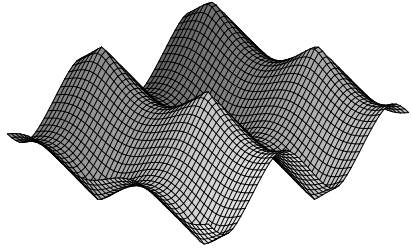}
\vspace*{-1cm}
\caption{A double-periodic surface (\ref{sinsin}) for $\alpha=\frac{1}{4}$} \label{fig:sinsin}
\end{figure}

For $\alpha=1/2$, equation (\ref{sinsin}) (after a suitable rotation in the $xy$-plane) turns into a product
\begin{equation}
\sin z=\sin x \sin y.
\label{sinsin1}
\end{equation}
We shall exploit namely this multiplicative form in our further constructions. To this aim, notice that examples (\ref{oneper}) and (\ref{sinsin1}) can be represented as members of one, more general, family. Indeed, equation (\ref{oneper}) should be regarded as
$$
\sn (\frac{z}{k'},k)=\sn(x,0) \sn(y,1),
$$
and (\ref{sinsin}) for $\alpha=1/2$ can be written as
$$
\sn (z,0)=\sn(x,0) \sn(y,0),
$$
where $\sn (t,k)$ is the elliptic Jacobi sinus.

\section{Harmonic level solutions}
\label{the1}

Here we outline the proof of Theorem~\ref{th_one} (see however \cite{SerTk} for a more detailed discussion).
We consider solutions to (\ref{eq1}) given in the form $u(x,y)=F(\phi (x,y))$,
where $F=F(t)$ and $\phi(x,y)$~ are some $C^2$-regular functions. A simple calculation reveals that
(\ref{eq1}) is equivalent to the equation
\begin{equation}
A(x,y)F''+B(x,y)F'+C(x,y)F'^3=0,
\label{ur2}
\end{equation}
where $F'=F'(t)$,  $A(x,y)={\phi }^2_x+{\phi }^2_y$, $B(x,y)={\phi
}_{xx}+{\phi }_{yy}$, and
$$
C(x,y)=-{\phi }^2_x{\phi
}_{yy}+2\phi_x\phi_y\phi_{xy}-{\phi
}^2_y{\phi}_{xx}.
$$

\begin{lemma}
Let $f(w)$, $g(w)$ be holomorphic functions of $w=x+iy$. Then the
following identities take place
\begin{equation}
\begin{split}
\prx \re (f \bar g)&=\re (f'\bar g +f \bar g'),\\
\pry \re (f \bar g)&=-\im (f'\bar g -f \bar g').
\label{l1}
\end{split}
\end{equation}
Here $\bar g$ denotes the conjugate to $g$ function.
\label{lemm1}
\end{lemma}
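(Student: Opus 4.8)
The plan is to verify the two identities by a direct computation using the Wirtinger-type calculus, exploiting the fact that for a holomorphic function $f(w)$ with $w=x+iy$ one has $\partial_x f = f'$ and $\partial_y f = i f'$, while for the conjugate $\bar g$ one has $\partial_x \bar g = \overline{g'}$ and $\partial_y \bar g = -i\,\overline{g'}$ (since $\bar g$ is anti-holomorphic). First I would write $\re(f\bar g) = \tfrac12(f\bar g + \bar f g)$ and differentiate termwise. For the $x$-derivative, the product rule gives $\partial_x(f\bar g) = f'\bar g + f\,\overline{g'}$ and $\partial_x(\bar f g) = \overline{f'}g + \bar f g' = \overline{f'\bar g + f\,\overline{g'}}$, so the sum is $2\re(f'\bar g + f\,\overline{g'})$ and dividing by $2$ yields the first identity. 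Note $f\bar g' $ in the statement means $f\overline{g'}$, the product of $f$ with the conjugate of $g'$.

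For the $y$-derivative I would similarly compute $\partial_y(f\bar g) = (if')\bar g + f(-i\,\overline{g'}) = i(f'\bar g - f\,\overline{g'})$ and $\partial_y(\bar f g) = (-i\,\overline{f'})g + \bar f(i g') = -i\,\overline{(f'\bar g - f\,\overline{g'})}$. Setting $h := f'\bar g - f\,\overline{g'}$, we get $\partial_y \re(f\bar g) = \tfrac12(ih - i\bar h) = \tfrac{i}{2}(h-\bar h) = \tfrac{i}{2}\cdot 2i\,\im h = -\im h = -\im(f'\bar g - f\,\overline{g'})$, which is exactly the second identity. So both identities fall out of the same short calculation, just tracking the factors of $i$ carefully; the only thing to be careful about is the sign convention $z - \bar z = 2i\,\im z$.

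I do not anticipate any real obstacle here — the lemma is a routine but useful bookkeeping device. The one point worth stating carefully is the rule $\partial_y(\text{holomorphic}) = i\cdot(\text{derivative})$, which follows from the Cauchy–Riemann equations, or equivalently from writing everything in terms of $\partial_w = \tfrac12(\partial_x - i\partial_y)$ and $\partial_{\bar w} = \tfrac12(\partial_x + i\partial_y)$ and using $\partial_{\bar w} f = 0$, $\partial_w \bar g = 0$. An alternative fully real-variable proof would write $f = p + iq$, $g = r + is$ with $p,q,r,s$ real and harmonic, use the Cauchy–Riemann equations $p_x = q_y$, $p_y = -q_x$, etc., expand $\re(f\bar g) = pr + qs$, and differentiate; this is more tedious but entirely elementary, and I would only fall back on it if the complex-calculus version seemed too terse for the intended readership.
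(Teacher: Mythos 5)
Your proof is correct. The route differs in bookkeeping from the paper's: you decompose $\re(f\bar g)=\tfrac12(f\bar g+\bar f g)$ and apply the complex product rule together with the Wirtinger-type facts $\prx f=f'$, $\pry f=if'$, $\prx\bar g=\overline{g'}$, $\pry\bar g=-i\,\overline{g'}$, then collect the result via $\re(iz)=-\im z$. The paper instead expands $\re(f\bar g)=\re f\,\re g+\im f\,\im g$ and differentiates each real factor using the Cauchy--Riemann consequences $\prx\re f=\re f'$, $\pry\re f=-\im f'$, $\prx\im f=\im f'$, $\pry\im f=\re f'$, before reassembling the four terms into $\re(f'\bar g+f\bar g')$. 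The two computations are equivalent (both rest on Cauchy--Riemann); yours is arguably shorter and less error-prone because it never splits into four real products, and in fact it can be compressed further by noting that $\prx$ and $\pry$ commute with $\re$, so $\prx\re(f\bar g)=\re\bigl(f'\bar g+f\,\overline{g'}\bigr)$ and $\pry\re(f\bar g)=\re\bigl(i(f'\bar g-f\,\overline{g'})\bigr)=-\im\bigl(f'\bar g-f\,\overline{g'}\bigr)$ in one line each. Your reading of the statement's notation $f\bar g'$ as $f\,\overline{g'}$ is the intended one, as the paper's own expansion confirms. The real-variable fallback you sketch at the end is precisely the paper's proof.
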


\begin{proof} The Cauchi-Riemann conditions imply
\begin{equation}
\begin{split}
\prx \re f=\re f',& \qquad \pry \re f=-\im f',\\
\prx \im f=\im f',&  \qquad \pry \im f=\re f'.
\label{koshi}
\end{split}
\end{equation}
Then
\begin{equation*}
\begin{split}
\prx \re (f \bar g)&=\prx (\re f \re g+\im f \im g)
=\re f' \re g+\re f \re g'\\
&+\im f' \im g+\im f \im g'= \re (f'\bar g +f \bar g')
\end{split}
\end{equation*}
and the first identity of the lemma follows. Similar one gets the remained identities.
\qed
\end{proof}

Next, by our assumption the function $\phi (x,y)$ is harmonic function, hence we have $\phi (x,y)=\re h(w)$ for some
holomorphic function $h(w)=h(x+iy)$.
In order to find the coefficients $A$, $B$ and $C$ in (\ref{ur2}),
we apply (\ref{koshi}):
$$
\phi _x=\prx \re h=\re h', \qquad
\phi _y=\pry \re h=-\im h',
$$
and $\phi _{xx}=\re h''$, $\phi _{xy}=-\im h''$, $\phi_{yy}=-\re h''$. Then
\begin{equation*}
\begin{split}
A(x,y)&={\phi }^2_x+{\phi }^2_y=|h'(w)|^2,
\\
B(x,y)&={\phi }_{xx}+{\phi }_{yy}=0,
\\
C(x,y)&=\re (h''{\bar {h'}}^2)
\end{split}
\end{equation*}
and the equation~(\ref{ur2}) takes the form
$$
|h'|^2F''+\re (h''
\bar{h'}^2){F'}^3=0.
$$
Setting
\begin{equation}\label{recall}
g(w)\equiv 1/h'(w)
\end{equation}
we find finally
\begin{equation}
\frac{F''(\phi)}{F'^3(\phi)}=\frac{\re g'}{|g|^2}.
\label{chast1}
\end{equation}

Thus Theorem~\ref{th_one} follows from the lemma below.

\begin{lemma}
The right hand side  $|g|^{-2}\,\re g'$ in (\ref{chast1}) is a function of $\phi =\re h(w)$  if and only if the following identity holds:
\begin{equation}
gg''-{g'}^2=c, \qquad c\in \mathbb{R}.
\label{mmm}
\end{equation}
\label{lemm2}
\end{lemma}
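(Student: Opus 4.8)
The plan is to express the condition ``$|g|^{-2}\re g'$ is a function of $\phi=\re h$'' as a differential constraint and then integrate it. First I would fix a point where $g'\neq 0$ (the degenerate case $g'\equiv 0$ gives alternative (i) trivially) and introduce $\Psi(w)=|g(w)|^{-2}\re g'(w)$. Saying that $\Psi$ depends only on $\phi=\re h$ is equivalent to saying that $\nabla\Psi$ is everywhere parallel to $\nabla\phi$, i.e. the Jacobian $\Psi_x\phi_y-\Psi_y\phi_x$ vanishes identically. Using the Cauchy--Riemann relations from \eqref{koshi} and Lemma~\ref{lemm1} (applied with suitable holomorphic $f$), one computes $\nabla\phi=(\re h',-\im h')$ and $\nabla\Psi$ in terms of $g$, $g'$, $g''$ and $h'=1/g$; after clearing the common factor $|g|^{-2}$ (and a further $|h'|^2=|g|^{-2}$) the vanishing-Jacobian condition should reduce, after a short manipulation, exactly to $\re\bigl((gg''-g'^2)\,\overline{\text{something holomorphic and nonvanishing}}\bigr)\equiv 0$.

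The key algebraic step is to recognize that $gg''-g'^2 = g^2\,(g'/g)' = -\,g^2\,(h'{}''/\cdots)$, or more usefully that $\dfrac{d}{dw}\dfrac{g'}{g} = \dfrac{gg''-g'^2}{g^2}$, so the quantity $gg''-g'^2$ is the natural ``curvature'' of $g$. I would show that the Jacobian condition forces $\dfrac{gg''-g'^2}{g^2}$ to be a real constant times a purely holomorphic factor whose only way of being consistent (since the left side is holomorphic) is to be itself constant; isolating that constant $c$ and multiplying back by $g^2$ yields \eqref{mmm}. Conversely, if $gg''-g'^2=c$ with $c\in\RR$, one differentiates $\Psi$ directly: writing $g'=p$, the identity gives $g''=(c+p^2)/g$, and substituting into the expressions for $\Psi_x$, $\Psi_y$ shows the Jacobian vanishes, so $\Psi$ is constant on each level curve of $\phi$; since the level curves of a nonconstant harmonic function foliate the domain, $\Psi$ is a function of $\phi$ alone.

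The main obstacle I expect is the bookkeeping in the ``only if'' direction: separating the genuine constraint from spurious factors. Concretely, the vanishing of $\Psi_x\phi_y-\Psi_y\phi_x$ produces an expression of the form $\re(\text{holomorphic}_1)\cdot\im(\text{holomorphic}_2)-\re(\text{holomorphic}_2)\cdot\im(\text{holomorphic}_1)$, which equals $\pm\im(\overline{\text{holomorphic}_2}\cdot\text{holomorphic}_1)$; one must verify that, after pulling out $h'$ and its conjugate, the remaining holomorphic factor multiplying $gg''-g'^2$ is nowhere zero, so that the imaginary-part identity forces $gg''-g'^2$ to be a (real) constant rather than merely satisfying some weaker relation. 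A secondary, milder point is handling the zeros of $g$ (poles of $h'$) and of $g'$: these are isolated, so an identity valid off a discrete set extends by continuity/analyticity, and \eqref{mmm}, being polynomial in $g$ and its derivatives, then holds everywhere. Once \eqref{mmm} is established, solving the linear-in-disguise ODE $gg''-g'^2=c$ (e.g. by setting $v=g'/g$, so $v'=c/g^2$, or directly noting $(\log g)''=\cdots$) produces precisely the three families $a\zeta+c$, $a e^{b\zeta}$, $a\sin(b\zeta+c)$ according to the sign of the relevant constant, which is the content of Theorem~\ref{th_one}.
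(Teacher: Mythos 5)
Your proposal follows essentially the same route as the paper: express functional dependence as the vanishing of the Jacobian $\phi_x\psi_y-\phi_y\psi_x$, compute it via the Cauchy--Riemann relations and Lemma~\ref{lemm1}, divide out the nonvanishing factor $|h'|^4$, and conclude from $\im(gg''-g'^2)\equiv 0$ that the holomorphic function $gg''-g'^2$ is a real constant. The "nonvanishing holomorphic factor" you worry about turns out to be trivial in the actual computation, so the reduction is clean and your plan is sound.
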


\begin{proof} Denote $\psi (x,y)=|g|^{-2}\,\re g'$. If $\psi$ and $\phi$ are functional dependent then their Jacobi determinant vanishes:
$$
\frac{\partial (\phi ,\psi)}{\partial (x,y)}=0.
$$
We show that this is equivalent to (\ref{mmm}).
Indeed, we have $\phi _x=\re h'$, $\phi
_y=-\im h'$, and (\ref{l1}) readily yields
\begin{equation}
\begin{split}
\psi _x &=2\re
(h''\bar h')\re g'+\re (h'\bar h'g''),\\
\psi _y &=-2\im
(h''\bar  h')\re g'-\im (h'\bar h'g'').
\label{kos1}
\end{split}
\end{equation}
Then we find the Jacobian:
\begin{equation}
\begin{split}
0=\phi
_x\psi _y-\phi _y\psi _x=-2\im (h'' \bar{h'}^2)\re g'-\im (g''h'\bar{h'}^2),
\end{split}
\end{equation}
therefore
$$
-2|h'|^4\im \left( \frac{h''}{{h'}^2}\right)\re g'-|h'|^4\im \left(
g''\frac{1}{h'}\right)=0.
$$
Recalling that (\ref{recall}) we find
$$
-2\im g'\re g' +\im (g'' g)=\im (g''g-{g'}^2)=0.
$$
Notice that in the last expression $gg''-{g'}^2$ is a holomorphic function whose imaginary part vanishes identically on an open set. It follows
that the holomorphic function is identically zero, hence there is a
real constant $c$ such that $gg''-{g'}^2\equiv c$. The lemma is proved.
\qed
\end{proof}

\section{One-periodic solutions}

An easy analysis shows that  the differential equation (\ref{mmm}) has the following solutions:

\begin{enumerate}
\item[(i)] $g(\zeta)=a\zeta+c$,
\item[(ii)] $g(\zeta)=a \,e^{b\zeta}$,
\item[(iii)] $g(\zeta)=a\sin(b\zeta+c)$.
\end{enumerate}

We leave it to the reader to verify that the cases (i), (ii) reduces to the classic examples:
plane, the maximal catenoid, the helicoid and
a maximal analogue of Sherk's surface.

Now we consider  $g(w)=\sin w$. Here we have
$h'(w)=\frac{1}{g(w)}$, hence,
$$
h(w)=\frac{1}{2}\ln \frac{\cos w -1}{\cos w+1}+C.
$$
Without loss of generality, we may assume that the constant $C$ in the
right side is zero. Then
\begin{equation}\label{expl}
\phi (x,y)= \re h(w)=\frac{1}{2}\ln \left
|\frac {\cos w -1}{\cos w+1} \right |=\frac{1}{2}\ln\frac{\cosh y-\cos
x}{\cosh y+\cos x},
\end{equation}
and
$$
\frac{1}{|g|^2}\re g'=\frac{\re \cos w }{|\sin w|^2}.
$$
On the other hand, we have
$$
2\sinh 2\phi (x,y)=\left
|\frac{\cos w -1}{\cos w+1} \right |-\left |\frac{\cos w +1}{\cos
w-1}\right | =
-\frac {4\re \cos w}{|\sin w|^2}=-\frac{4}{|g|^2}\re g'.
$$
Thus, (\ref{chast1}) takes the form
$$
F''(\phi)+\frac{1}{2}F'^3(\phi)\sinh 2 \phi =0.
$$
Intergrating this ordinary equation yields
$$
\frac{1}{{F'}^2(\phi)}=\frac{1}{2}\cosh 2\phi+\frac{k}{2}, \quad k
\in \mathbb{R}.
$$
Hence
\begin{equation}\label{varvar}
F'(\phi)=\frac{1}{\sqrt{\frac{1}{2}\cosh 2\phi
+\frac{k}{2}}}=\frac{2e^{\phi }}{\sqrt{e^{4\phi }+2ke^{2\phi }+1}}.
\end{equation}

Now we find the values of parameter $k$ which corresponds
to \textit{space-like} $F$, that is where the inequality
$$|\nabla F(\phi (x,y))|<1$$ holds everywhere when $F(\phi)$ is regular.
To  this aim we notice that
\begin{equation}
|\nabla F(\phi (w))|=|F'(\phi)|\,|\nabla \phi|=
|F'(\phi)|\,|h'(w)|.
\label{ggg}
\end{equation}
By introducing a new variable $\gamma =\frac{\cos x}{\cosh y}$,
we find
$$
|h'(w)|=\frac{1}{|\sin w|}=\frac{1}{\sqrt{\cosh^2y-\cos^2x}}=
\frac{1}{\cosh y\sqrt{1-\gamma^2}}.
$$

On the other hand, by virtue of (\ref{expl})
we find
$$
|F'(\phi)|=\sqrt{\frac{2(1-\gamma^2)}{(1+k)-(k-1)\gamma^2}}.
$$
Plugging the above expression into (\ref{ggg}) yields
$$
|\nabla F(\phi (w))|=\frac{1}{\cosh
y}{\frac{\sqrt2}{\sqrt{(1+k)-(k-1)\gamma^2}}}.
$$
A simple analysis of the last expression shows that the space-like
condition for $F(\phi)$ (for any $x$ and $y$) is fulfilled only for $k>1$.

In order to find $F$ explicitly we integrate (\ref{varvar}). We assume without less of generality that $F(0)=0$. Then
\begin{equation*}
\begin{split}
F(\phi)&=2\int\limits _{1}^{e^{\phi}} \frac{d\,
\xi}{\sqrt{{\xi}^4 +2k{\xi}^2+1}}\\
&=\sqrt{2}\int\limits _{0}^{\tanh \phi} \frac{d\,
t}{\sqrt{1-{t}^2}\sqrt{(1+k)-(k-1){t}^2}}\\
&=\alpha '\int\limits _{0}^{\tanh \phi} \frac{d\,
t}{\sqrt{1-{t}^2}\sqrt{1-{\alpha}^2{t}^2}},
\end{split}
\end{equation*}
where  ${\alpha}^2=\frac{k-1}{k+1}$ and ${\alpha '}^2=1-{\alpha }^2=1-\frac{k-1}{k+1}=\frac{2}{k+1}$.
The latter integral can be simplified by means of the Jacobi elliptic sinus, namely
$$
\sn \left( \frac{F(\eta)}{\alpha '};\alpha \right)=\tanh \eta
$$

Thus, finally  we get the solution $z=F(\phi(x,y))$ in the following form:
$$
\sn \left( \frac{z}{\alpha '};\alpha \right)=\tanh \phi(x,y)=
-\frac{\cos x}{\cosh y}.
$$

In summary:

\begin{theorem}
Given an arbitrary $a>0$ and $\alpha \in (0;1)$, $\alpha '=\sqrt{1-{\alpha}^2}$, the implicit equation
$$
\sn \left( \frac{az}{\alpha '};\alpha \right)=\frac{\cos ax}{\cosh ay},
$$
defines a space-like solution $z=u_{a,\alpha}(x,y)$ of (\ref{eq1}). This solution is a real
analitic function everywhere in $\mathbb{R}^2$ outside the set of points
$$
\{x=\frac{\pi k}{a},\; y=0, \quad k\in \mathbb{Z}\}.
$$
\end{theorem}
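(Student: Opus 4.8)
The plan is to reduce everything to the single-variable computation already carried out in the preceding pages, and then to account for the extra scaling parameter $a$ by a trivial change of variables. First I would observe that the maximal surface equation (\ref{eq1}) is invariant under the homothety $(x,y,z)\mapsto(x/a,y/a,z/a)$: if $v(x,y)$ solves (\ref{eq1}), then so does $u(x,y)=a^{-1}v(ax,ay)$, since (\ref{eq1}) is homogeneous of degree zero in the derivatives and the second-order terms all scale the same way. Hence it suffices to treat $a=1$ and then rescale; the singular set $\{x=\pi k,\ y=0\}$ of the $a=1$ solution is carried to $\{x=\pi k/a,\ y=0\}$ under the dilation, which is exactly the set claimed.

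For $a=1$ I would recall the construction in Section~\ref{the1} and the present section: with $g(w)=\sin w$, $h'(w)=1/\sin w$, the harmonic function $\phi(x,y)=\re h(w)=\tfrac12\ln\frac{\cosh y-\cos x}{\cosh y+\cos x}$ from (\ref{expl}), and $F$ determined by (\ref{varvar}), one gets (via Lemma~\ref{lemm2}, since $g(w)=\sin w$ satisfies $gg''-g'^2=-1$) that $u=F\circ\phi$ solves (\ref{eq1}). Carrying out the elliptic-integral substitution shown in the text gives the closed form $\sn(z/\alpha';\alpha)=\tanh\phi(x,y)=-\cos x/\cosh y$ with $\alpha^2=(k-1)/(k+1)$, $\alpha'^2=2/(k+1)$. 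Replacing $\cos x$ by $-\cos x$ (that is, translating $x\mapsto x+\pi$, which leaves (\ref{eq1}) invariant and only shifts the singular lattice by a half-period, hence does not change the set $\{x=\pi k\}$) turns this into $\sn(z/\alpha';\alpha)=\cos x/\cosh y$, the stated equation. As $k$ ranges over $(1,\infty)$, $\alpha$ ranges over all of $(0,1)$, so the parametrization by $\alpha\in(0,1)$ is the same family; I would simply state the bijection $k\leftrightarrow\alpha$ explicitly.

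Next I would verify space-likeness and regularity. Using (\ref{ggg}), $|\nabla u|=|F'(\phi)|\,|h'(w)|$, and the computations in the text give $|\nabla u(x,y)|=\dfrac{\sqrt2}{\cosh y\,\sqrt{(1+k)-(k-1)\gamma^2}}$ with $\gamma=\cos x/\cosh y\in[-1,1]$. Since $k>1$, the denominator under the root is at least $(1+k)-(k-1)=2$, so $|\nabla u|\le 1/\cosh y\le 1$, with equality only when $\cosh y=1$ and $\gamma^2=1$, i.e. $y=0$ and $\cos x=\pm1$, i.e. $x\in\pi\mathbb Z$. This pins down $\Sigma_u$ exactly and confirms the solution is space-like everywhere off that discrete set. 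Away from $\Sigma_u$, $\phi$ is real-analytic (it is the real part of a holomorphic function, and $\sin w\ne0$ there), and $F$ is real-analytic in a neighborhood of the range of $\phi$ because the radicand $e^{4\phi}+2ke^{2\phi}+1$ is bounded below by $2e^{2\phi}>0$ for $k>-1$; hence $u=F\circ\phi$ is real-analytic, and the implicit equation can legitimately be solved for $z=u(x,y)$ there since $\sn$ is a local analytic diffeomorphism where its derivative $\cn\,\dn$ is nonzero, which holds as long as $\tanh\phi\in(-1,1)$, i.e. everywhere.

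The main obstacle I anticipate is purely bookkeeping rather than conceptual: making the elliptic-function identity $\sn(F(\eta)/\alpha';\alpha)=\tanh\eta$ precise, including the choice of branch of $\sn$ and the domain on which the inversion is valid, and checking that the resulting $z=u(x,y)$ is a single-valued continuous function on all of $\mathbb R^2$ (in particular continuous across $\Sigma_u$, where $\tanh\phi\to\pm1$ and $\sn\to\pm1$, so $z$ tends to a finite limit $\pm\alpha'K(\alpha)$ and the singularity is of the removable, light-cone type predicted by (\ref{eq4})). Everything else — the PDE being satisfied, the scaling in $a$, the determination of $\Sigma_u$ — follows mechanically from the identities already established above.
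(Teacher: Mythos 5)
Your proposal is correct and follows essentially the same route as the paper: the computation with $g(w)=\sin w$, the explicit $\phi$ from (\ref{expl}), the integration of (\ref{varvar}) into the Jacobi-sinus form, and the space-likeness estimate via $\gamma=\cos x/\cosh y$ showing $|\nabla u|\le 1/\cosh y$ with equality exactly at $\{x\in\pi\mathbb Z,\ y=0\}$. Your added bookkeeping (the dilation in $a$, the half-period shift fixing the sign, and the continuity of $z$ across the singular set where $\sn\to\pm1$) only makes explicit what the paper leaves to the reader.
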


Some  geometric remarks are appropriate here. The dependence on $a$ amounts to a homothety of a graph $z=u_{a,\alpha}(x,y)$, while
dependence on $\alpha$ has a more intrinsic nature. In particular, for different values of $\alpha\in(0,1)$, the corresponding graphs $z=u_{1,\alpha}(x,y)$ do not Lorentz isometric.

One can also see that $M(\alpha)$ is located in the parallel slab
$|z|\leq \mathrm{K}(\alpha)\alpha'$, where $\mathrm{K}(\alpha)$ is the complete elliptic integral of the first kind (the least positive solution of equation $\sn(\mathrm{K}(\alpha),\alpha)=1$).

%%%%%%%%%%%%%%%%%%%%%%%%
\section{Generating Matrices}
\label{sec:genmat}

In this section we introduce a family of generating matrices which will be used in constructing of doubly-periodic solutions.
In what follows, by a `matrix' we mean a $3\times3$-matrix with real coefficients. By $S$ we denote also the class of all
permutations of the index set $\{1,2,3\}$. Below we define the generating
matrix and consider its basic properties.

A non-zero matrix $A=(a_{ij})$ is called \textit{{generating}}, or $A\in \ZM _3$, if
for all $(i,j,k)$ and $(\alpha,\beta,\gamma)\in S$ the following identities
hold
\begin{equation}
a_{i\alpha}a_{i\beta}=a_{j\gamma}a_{k\gamma}.
\label{eq21}
\end{equation}

Let us associate with the matrix $A=(a_{ij})$  a new matrix $A'$ with entries
\begin{equation}
A'=\left(
\begin{array}{ccc}
a_{11}&a_{22}&a_{33}\\
a_{23}&a_{31}&a_{12}\\
a_{32}&a_{13}&a_{21}
\end{array}
\right).
\label{eq22}
\end{equation}
Then (\ref{eq21}) is equivalent to vanishing of all second order minors of $A'$. Since $A$ is non-zero, $A$ is generating if and only if ${\rm rank}\,A'=1.$
The last observation yields that $A$ must be of the form
\begin{equation}
A=\left(
\begin{array}{ccc}
p_1p_2&q_1r_2&r_1q_2\\
r_1r_2&p_1q_2&q_1p_2\\
q_1q_2&r_1p_2&p_1r_2
\end{array}
\right),
\label{eq23}
\end{equation}
for an appropriate set of reals $p_i,q_i,r_i$ ($i=1,2$) satisfying the non-degenerating property
$$
p_i^2+q_i^2+r_i^2\neq 0, \  i=1,2.
$$

Indeed, the above rank condition yields the existence of a non-zero
vector $\xi=(p_1,q_1,r_1)$ and real scalars  $p_2,q_2,r_2$, not all zero, such that every row of $A'$ is collinear to $\xi$
with $p_2,q_2$ and $r_2$ being the proportionality  coefficients. This implies (\ref{eq23}).

Another useful property of generating matrices is that \textit{the product of all elements in each row and each string has the same value}.  We call the common value  the {\it module} of
the generating matrix $A$ and denote it by $\mod$. If $\mod\neq 0$ we call $A$
{\it elliptic}, otherwise $A$ is called {\it parabolic}. The following proposition follows easily from
the mentioned above representations and characterizes parabolic generating matrices completely.

\begin{lemma}
After an appropriate permutations of its strings and rows, any
parabolic generating matrix  $A\in \ZM _3$ can be brought into one of the following forms:
\begin{equation*}
\begin{split}
\left(
\begin{array}{ccc}
0&a_{12}&a_{13}\\
a_{21}&0&a_{23}\\
a_{31}&a_{32}&0
\end{array}
\right), \,
\left(
\begin{array}{ccc}
0&a_{12}&a_{13}\\
a_{21}&0&0\\
a_{31}&0&0
\end{array}
\right), \,
\left(
\begin{array}{ccc}
a_{11}&0&0\\
0&a_{22}&0\\
0&0&a_{33}
\end{array}
\right).
\end{split}
\end{equation*}
Here either all $a_{ij}\neq 0,$ or $A$ contains a zero-row or zero-string.
\end{lemma}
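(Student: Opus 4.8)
The plan is to exploit the explicit parametrization \eqref{eq23} of generating matrices and analyze which of the parameters $p_i,q_i,r_i$ can vanish under the parabolicity hypothesis $\theta(A)=0$. Recall that $\theta(A)$ is the common value of the products of the entries along any row or column; from \eqref{eq23} one computes that each such product equals $p_1q_1r_1\cdot p_2q_2r_2$, so $\theta(A)=0$ forces $p_1q_1r_1=0$ or $p_2q_2r_2=0$. By symmetry of the roles of the index $i=1$ and $i=2$ (which corresponds to transposing $A$, an operation realized by a simultaneous permutation of rows and columns), I may assume $p_1q_1r_1=0$, i.e. at least one of $p_1,q_1,r_1$ vanishes.

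Next I would run through the cases according to how many of $p_1,q_1,r_1$ are zero, using the non-degeneracy condition $p_1^2+q_1^2+r_1^2\neq0$ to rule out all three vanishing. First suppose exactly one of them is zero; after a permutation of the rows of $A$ (which permutes $p_1,q_1,r_1$ cyclically together with a matching permutation of columns — here one should check against \eqref{eq23} exactly which row/column permutations induce which permutation of the triple $(p_1,q_1,r_1)$) I may assume $p_1=0$. Plugging $p_1=0$ into \eqref{eq23} kills the entries $a_{11}=p_1p_2$, $a_{23}=p_1q_2$, $a_{32}=p_1r_2$, i.e. exactly the three diagonal-type positions $(1,1),(2,3),(3,2)$, leaving the first matrix in the list (with the remaining entries $a_{ij}=$ products of nonzero $q_1,r_1$ with $p_2,q_2,r_2$, so the "either all nonzero or there is a zero row/column" clause is satisfied once we also allow some $p_2,q_2,r_2$ to vanish). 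If instead two of $p_1,q_1,r_1$ vanish, say $p_1=q_1=0$ (again up to a row/column permutation), then from \eqref{eq23} the only possibly nonzero entries are $a_{13}=r_1q_2$, $a_{21}=r_1r_2$, $a_{31}=r_1p_2$ — one entry in the first row and two in the first column — and a further transposition/permutation brings this into the second matrix in the list. If additionally $p_2q_2r_2=0$ on top of $p_1q_1r_1=0$, one checks that the surviving entries lie on the diagonal, giving the third (diagonal) form; and whenever enough of the $p,q,r$ vanish the matrix simply acquires a zero row or zero column, which is the degenerate alternative in the statement.

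The main obstacle I anticipate is purely bookkeeping: keeping straight the correspondence between permutations of the rows and columns of $A$ and the induced action on the six parameters $(p_1,q_1,r_1,p_2,q_2,r_2)$ in \eqref{eq23}, and verifying that the group generated by these row/column swaps acts transitively enough on the vanishing patterns to reduce every parabolic case to one of the three normal forms. Concretely I would tabulate, for each transposition of two indices applied simultaneously to rows and columns, its effect on $(p_1,q_1,r_1)$ and on $(p_2,q_2,r_2)$ — it should be either a transposition of two of $p_1,q_1,r_1$ (fixing the others) possibly combined with swapping the roles of index $1$ and $2$ — and then a short case check shows the three vanishing scenarios above exhaust all possibilities. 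Everything else (substituting the vanishing parameters into \eqref{eq23} and reading off the zero pattern) is a direct computation requiring no ideas.
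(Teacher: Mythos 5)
Your overall strategy --- read off the zero pattern of a parabolic matrix from the parametrization \eqref{eq23}, using $\theta(A)=p_1q_1r_1\,p_2q_2r_2$ and the non-degeneracy condition --- is exactly the intended route (the paper offers no written proof, asserting the lemma ``follows easily'' from that representation). However, your bookkeeping of which entries each parameter occupies is wrong, and this propagates into a wrong assignment of vanishing patterns to normal forms. From \eqref{eq23}, $p_1$ multiplies $a_{11},a_{22},a_{33}$ (the main diagonal), while it is $p_2$ that multiplies $a_{11},a_{23},a_{32}$; in general each of the six parameters occupies one of the six transversals (generalized diagonals) of the $3\times3$ grid, the index-$1$ parameters the three even ones and the index-$2$ parameters the three odd ones. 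Consequently: (a) if exactly one parameter vanishes, the zeros fill one transversal and a row/column permutation gives the first form --- this part of your argument survives; (b) if two parameters from the \emph{same} triple vanish, their two transversals are disjoint and cover six cells, so the surviving entries fill the remaining transversal, which permutes to the \emph{diagonal} (third) form --- not the second form as you claim (also note $r_1p_2$ sits at position $(3,2)$, not $(3,1)$); (c) the second form arises precisely when one parameter from \emph{each} triple vanishes: an even and an odd transversal always meet in exactly one cell, so their union has five cells and the four survivors form the pattern $\{(1,2),(1,3),(2,1),(3,1)\}$ up to permutation. Your claim that ``$p_2q_2r_2=0$ on top of $p_1q_1r_1=0$'' puts the survivors on the diagonal is false (try $p_1=p_2=0$). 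Three or more vanishing parameters force a zero row or column, as you say.

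A secondary point: transposing $A$ does \emph{not} interchange the two parameter triples (it fixes $p_1,p_2,q_2,r_2$ and swaps $q_1\leftrightarrow r_1$), and in any case transposition is not among the operations the lemma allows. The interchange of the triples is instead realized by a single row or column transposition (an odd permutation flips the parity of every transversal); alternatively the reduction is unnecessary, since the corrected case analysis in (a)--(c) is already symmetric in the two triples. With the entry positions fixed and the three cases reassigned as above, your argument closes.
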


\begin{lemma}
If $A$ is a generating matrix then the quantity
\begin{equation}
\Delta_\alpha:=
(a_{j\alpha}+a_{k\alpha}-a_{i\alpha})^2-4a_{i\beta}a_{i\gamma},
\end{equation}
where $(i,j,k),(\alpha,\beta,\gamma)\in S$, depends on the index $\alpha$
only.
\end{lemma}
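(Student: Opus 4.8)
The plan is to use the explicit parametrization \eqref{eq23}, since by the discussion preceding it every generating matrix has that form. Writing $\xi_1=(p_1,q_1,r_1)$ and $\xi_2=(p_2,q_2,r_2)$, the columns of $A$ are built from the three ``cyclic'' products of the coordinates of $\xi_1$ against a fixed shuffle of the coordinates of $\xi_2$. So I would first fix $\alpha=1$, say, read off from \eqref{eq23} the three entries $a_{11},a_{21},a_{31}$ in the first column, and compute $\Delta_1$ for one admissible choice of $(i,j,k)$, e.g. $(i,j,k)=(1,2,3)$. Here $a_{j\alpha}+a_{k\alpha}-a_{i\alpha}=r_1r_2+q_1q_2-p_1p_2$, while the pair $(\beta,\gamma)$ must be $(2,3)$ or $(3,2)$, giving $4a_{i\beta}a_{i\gamma}=4\,a_{12}a_{13}=4\,(q_1r_2)(r_1q_2)=4q_1r_1q_2r_2$. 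Thus
\begin{equation*}
\Delta_1=(r_1r_2+q_1q_2-p_1p_2)^2-4q_1r_1q_2r_2.
\end{equation*}

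The key step is then to check that the \emph{same} value is obtained for every other admissible choice of $(i,j,k)$ and $(\beta,\gamma)$ with $\alpha$ still equal to $1$. Cyclically permuting $(i,j,k)$ to $(2,3,1)$ forces $(\beta,\gamma)\in\{(3,1),(1,3)\}$, so $a_{j\alpha}+a_{k\alpha}-a_{i\alpha}=q_1q_2+p_1p_2-r_1r_2$ (the negative of the previous linear form, hence the same square) and $4a_{i\beta}a_{i\gamma}=4\,a_{13}a_{11}=4\,(r_1q_2)(p_1p_2)$; similarly for $(3,1,2)$. The identity then reduces to the symmetric-function computation that
\begin{equation*}
(r_1r_2+q_1q_2-p_1p_2)^2-4q_1r_1q_2r_2=(q_1q_2+p_1p_2-r_1r_2)^2-4p_1r_1p_2r_2=(p_1p_2+r_1r_2-q_1q_2)^2-4p_1q_1p_2q_2,
\end{equation*}
which, after expanding, is just the classical identity $(X+Y-Z)^2-4XY=(Y+Z-X)^2-4YZ=(Z+X-Y)^2-4ZX$ with $X=p_1p_2$, $Y=q_1q_2$, $Z=r_1r_2$ (each side equals $X^2+Y^2+Z^2-2XY-2YZ-2ZX$). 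For the odd permutations $(\alpha,\beta,\gamma)$ one only swaps $\beta\leftrightarrow\gamma$, which changes nothing since $a_{i\beta}a_{i\gamma}$ is symmetric in $\beta,\gamma$, and swapping $j\leftrightarrow k$ leaves $a_{j\alpha}+a_{k\alpha}$ unchanged. This establishes that $\Delta_1$ is well defined, independent of all index choices except $\alpha$; the cases $\alpha=2,3$ are identical after the evident relabelling (or, more slickly, after applying a row permutation to $A$, which by the ``module'' property permutes the $\Delta_\alpha$ accordingly).

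I expect the only real bookkeeping obstacle to be verifying that, for each fixed $\alpha$, the off-diagonal-looking quantity $a_{i\beta}a_{i\gamma}$ always collapses to $X Y$, $YZ$ or $ZX$ (never, say, $XZ$ paired with the wrong linear form); this is precisely where the generating condition \eqref{eq21} — equivalently the rank-one structure \eqref{eq23} — is used, and it is what makes the three expressions above match term by term rather than merely ``by symmetry.'' Once the substitution from \eqref{eq23} is in place, the remainder is the one-line algebraic identity noted above, so no further ideas are needed.
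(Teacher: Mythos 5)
Your overall strategy---substitute the rank-one parametrization (\ref{eq23}) and reduce everything to the identity $(X+Y-Z)^2-4XY=(Y+Z-X)^2-4YZ=(Z+X-Y)^2-4ZX$---is workable, but the concrete computation you give for the second case is wrong, and the error propagates into your displayed chain of equalities, which is false as stated. First, the permutations $(i,j,k)$ and $(\alpha,\beta,\gamma)$ in the lemma are independent: with $\alpha=1$ fixed, $\{\beta,\gamma\}=\{2,3\}$ for \emph{every} choice of $(i,j,k)$, so passing to $(i,j,k)=(2,3,1)$ does not ``force $(\beta,\gamma)\in\{(3,1),(1,3)\}$'' (that would mean $\alpha=2$). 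The correct product for $(i,j,k)=(2,3,1)$ is $a_{i\beta}a_{i\gamma}=a_{22}a_{23}=(p_1q_2)(q_1p_2)=XY$, not $a_{13}a_{11}$ (which in any case equals $p_1r_1p_2q_2$, not the $p_1r_1p_2r_2$ appearing in your display). As a result the middle and last members of your displayed identity, $(X+Y-Z)^2-4XZ$ and $(Z+X-Y)^2-4XY$, do \emph{not} equal $(Y+Z-X)^2-4YZ$ in general: in the classical identity the product under the $-4$ must be the product of the two variables carrying a plus sign in the linear form. With the correct entries one gets $\Delta_1=(Y+Z-X)^2-4YZ=(X+Y-Z)^2-4XY=(Z+X-Y)^2-4ZX$ for $(i,j,k)=(1,2,3),(2,3,1),(3,1,2)$ respectively (here $X=a_{11}$, $Y=a_{31}$, $Z=a_{21}$), and the argument does close; so the gap is one of execution rather than of strategy, but as written the key step fails.

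For comparison, the paper's proof needs neither the parametrization nor any case analysis: the defining relation (\ref{eq21}), applied to the permutation $(\beta,\gamma,\alpha)$, gives $a_{i\beta}a_{i\gamma}=a_{j\alpha}a_{k\alpha}$, and substituting this into the expansion of $\Delta_\alpha$ yields $\sum_{\sigma}a_{\sigma\alpha}^2-2\sum_{\sigma<\rho}a_{\sigma\alpha}a_{\rho\alpha}$, which is visibly symmetric in the three entries of column $\alpha$ and hence depends on $\alpha$ alone. That single substitution is precisely your ``classical identity'' in disguise, but it handles all $(i,j,k)$ and all $\alpha$ simultaneously and is where the generating condition enters most transparently.
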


\begin{proof}
We have
\begin{equation*}
\begin{split}
\Delta_\alpha&=a_{j\alpha}^2+a_{k\alpha}^2+a_{i\alpha}^2+
2(a_{j\alpha}a_{k\alpha}-2a_{i\beta}a_{i\gamma}-a_{j\alpha}a_{i\alpha}-
a_{k\alpha}a_{i\alpha})\\
&=a_{j\alpha}^2+a_{k\alpha}^2+a_{i\alpha}^2-
2(a_{j\alpha}a_{k\alpha}+a_{j\alpha}a_{i\alpha}+a_{k\alpha}a_{i\alpha})\\
&=\sum_{\sigma=1}^{3}a_{\sigma\alpha}^2 -
\sum_{\sigma\neq\rho}^{}a_{\sigma\alpha}a_{\rho\alpha}.
\end{split}
\end{equation*}
Now it is evident that the last expression depends only on $\alpha$ and the
lemma is proved.
\qed
\end{proof}

\begin{definition}
The quantity $\Delta(A)\equiv \frac{1}{4}\Delta_2$ will be called the {\it discriminant}
of $A$.
\end{definition}

Define an action of the multiplicative group  $\RR^+\times\RR^+$ ($\RR^+=\{z\in\R{}: z>0\}$)
on $3\times 3$ matrices as follows
\begin{equation}\label{eq:array}
\lambda(A)=\left(
\begin{array}{ccc}
\frac{1}{\lambda_1}a_{11}&a_{12}&\lambda_1a_{13}\\
\frac{1}{\lambda_2}a_{21}&a_{22}&\lambda_2a_{23}\\
\lambda_1\lambda_2a_{31}&a_{32}&\frac{1}{\lambda_1\lambda_2}a_{33}
\end{array}
\right),
\end{equation}
where
$$
\lambda=(\lambda_1,\lambda_2),\quad \mu=(\mu_1,\mu_2).
$$
It is evident that $\lambda(A)\in \ZM _3$ if and only if $A\in \ZM _3$. Denote by $\ZG_3=\ZM_3/(\RR^+\times\RR^+)$ the factor space and shall write $A\sim B$ if $B=\lambda(A)$. One can  verify the module $\theta(A)$ and the discriminant $\Delta(A)$ give rise to the $\ZG_3$. The reason why this factor space is important we shall see below: two  equivalent matrices $A\sim B$ produce one maximal surface.

\begin{lemma}
Let $A\in \ZM _3$ be an elliptic generating matrix ($\mod\ne0$).
Then  there is $\varepsilon_k=\pm1$, $k=2,3$, such that $A$ is equivalent to the matrix
\begin{equation}
A_{\varepsilon_2\varepsilon_3}(a,b,c)=
\left(
\begin{array}{ccc}
a&b&c\\
\varepsilon_2\varepsilon_3c&\varepsilon_2a&\varepsilon_3b\\
\varepsilon_2\varepsilon_3b&\varepsilon_2c&\varepsilon_3a
\end{array}
\right).
\label{eq27}
\end{equation}
\end{lemma}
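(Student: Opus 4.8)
The plan is to start from the general representation (\ref{eq23}) of an elliptic generating matrix and to use the $\RR^+\times\RR^+$-action (\ref{eq:array}) to normalize the ``off-diagonal'' scaling freedom, while tracking carefully the signs that cannot be removed. Since $\theta(A)\ne0$, none of the six parameters $p_i,q_i,r_i$ ($i=1,2$) can vanish: indeed each of them appears as a factor in some entry, and the product of all entries in a row equals $\theta(A)^{?}$ up to the exponents forced by (\ref{eq23}), so a single vanishing parameter would force $\theta(A)=0$. Thus all $p_i,q_i,r_i$ are nonzero reals, and we may freely divide by them.

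First I would choose $\lambda_1,\lambda_2$ to kill the ratios between $p_1,q_1,r_1$ and between $p_2,q_2,r_2$. Concretely, applying $\lambda(A)$ to the matrix (\ref{eq23}) rescales the three columns' ``first-index'' data by $\lambda_1^{\mp1}$ in rows $1$ and $3$ and by $\lambda_2^{\mp1}$ in row $2$ (and $\lambda_1\lambda_2$ in the $(3,1)$ and $(3,3)$ slots), so that the effect on the parameters is $p_i\mapsto p_i$, $q_i\mapsto \lambda_i^{\pm1} q_i$, $r_i\mapsto \lambda_i^{\mp1} r_i$ for an appropriate reading of (\ref{eq:array}). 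One then picks $\lambda_1=|q_1/r_1|^{1/2}$-type values (positive, as required) so that after the action $|q_1|=|r_1|$ and $|q_2|=|r_2|$; rescaling overall is not needed because the representation (\ref{eq23}) is homogeneous of degree $1$ in each of $\xi=(p_1,q_1,r_1)$ and $(p_2,q_2,r_2)$ separately, and the class in $\ZG_3$ only sees the matrix, so we may further assume $|p_i|=|q_i|=|r_i|$ for $i=1,2$. Setting $a:=p_1p_2$ (up to sign), $b,c$ the other two common-modulus products, we read off that the matrix has the shape (\ref{eq27}) with each entry of absolute value in $\{|a|,|b|,|c|\}$, and the signs in rows $2$ and $3$ are governed by the sign patterns $\varepsilon_2:=\operatorname{sign}(p_2/q_2)$-type and $\varepsilon_3$ coming from the second factor only. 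The point is that the two sign ambiguities $\varepsilon_2,\varepsilon_3\in\{\pm1\}$ are exactly the residual discrete freedom that the \emph{positive} group $\RR^+\times\RR^+$ cannot undo.

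The main obstacle, and the step I would be most careful about, is bookkeeping the signs so that the final matrix really is of the form (\ref{eq27}) and not some other sign arrangement: one must check that once $|p_i|=|q_i|=|r_i|$, the nine entries of (\ref{eq23}) are $p_1p_2,\ q_1r_2,\ r_1q_2$ in the first row and the remaining two rows are obtained by cyclically permuting the \emph{first-factor} indices while fixing or flipping according to $\operatorname{sign}$ of the second-factor parameters — this is precisely what produces the structured pattern with a single $\varepsilon_2$ multiplying row $2$ and a single $\varepsilon_3$ multiplying row $3$ (and $\varepsilon_2\varepsilon_3$ in the corner), after possibly permuting strings/rows as the statement allows. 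I would verify the identities (\ref{eq21}) directly for $A_{\varepsilon_2\varepsilon_3}(a,b,c)$ as a sanity check, and also note that $\theta(A_{\varepsilon_2\varepsilon_3})=\varepsilon_2\varepsilon_3\,abc$ and $\Delta(A_{\varepsilon_2\varepsilon_3})$ depend only on $a,b,c$ and the signs, consistent with $\theta$ and $\Delta$ descending to $\ZG_3$. The remaining routine algebra — solving $\lambda_1,\lambda_2>0$ from the modulus equations and confirming the resulting entries — I would leave to the reader.
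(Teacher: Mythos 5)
There is a genuine gap in your normalization step. Writing $A$ in the form (\ref{eq23}), the action (\ref{eq:array}) fixes the middle column $(q_1r_2,\,p_1q_2,\,r_1p_2)$ pointwise, so the three absolute values $|q_1r_2|$, $|p_1q_2|$, $|r_1p_2|$ are invariants of the equivalence class. If you could arrange $|p_i|=|q_i|=|r_i|$ for \emph{both} $i=1$ and $i=2$, then all nine entries of (\ref{eq23}) would have the same absolute value; in particular the three middle-column invariants would coincide, which is false for a generic elliptic generating matrix (e.g.\ $A_{++}(1,2,3)$ has middle column $(2,1,3)$). Counting parameters makes the same point: you are imposing four independent ratio conditions with only the two parameters $\lambda_1,\lambda_2$, and the extra ``overall rescaling'' $(\xi,\xi_2)\mapsto(t\xi,\xi_2/t)$ you invoke changes no ratio within either factor, so it supplies no additional freedom. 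Hence the claimed reduction to $|p_i|=|q_i|=|r_i|$, $i=1,2$, cannot be carried out, and the reading-off of (\ref{eq27}) from it does not go through as written.

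The repair is to normalize only the second factor. In terms of invariantly defined ratios, $q_2/p_2=a_{22}/a_{11}$ and $r_2/p_2=a_{33}/a_{11}$, and the action multiplies these by $\lambda_1$ and $\lambda_2^{-1}$ respectively; so the two group parameters are exactly enough to achieve $|p_2|=|q_2|=|r_2|$, equivalently $|a_{11}|=|a_{22}|=|a_{33}|$. This weaker normalization already yields (\ref{eq27}): writing $p_2=\rho s_p$, $q_2=\rho s_q$, $r_2=\rho s_r$ with $s_p,s_q,s_r=\pm1$ and setting $a=\rho s_pp_1$, $b=\rho s_rq_1$, $c=\rho s_qr_1$, $\varepsilon_2=s_ps_q$, $\varepsilon_3=s_ps_r$, the nine entries of (\ref{eq23}) fall exactly into the pattern (\ref{eq27}); your sign bookkeeping is correct once it is restricted to the second factor only, and no constraint on $|p_1|,|q_1|,|r_1|$ is needed (they merely determine $|a|,|b|,|c|$). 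This is precisely the paper's argument, which sets $\varepsilon_k=\mathrm{sign}(a_{kk}/a_{11})$, $\lambda_1=\varepsilon_2a_{11}/a_{22}$, $\lambda_2=\varepsilon_3a_{33}/a_{11}$ and reads $a,b,c$ off the invariant middle column. Two smaller points: your justification that $\theta(A)=p_1q_1r_1p_2q_2r_2\ne0$ forces all six parameters to be nonzero is correct and is the right way to license the divisions; and in your sanity check the module is $\theta(A_{\varepsilon_2\varepsilon_3}(a,b,c))=abc$, without the factor $\varepsilon_2\varepsilon_3$.
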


\begin{proof}
Let us denote by $\varepsilon_k={\rm sign}(a_{kk}/a_{11}), \  k=2,3,$
and set $\lambda_1=\varepsilon_2a_{11}/a_{22}, \
\lambda_2=\varepsilon_3a_{33}/a_{11}$.
Then  $\lambda_i>0$ and for $a=\varepsilon_2a_{22}$, $b=a_{12}$,
$c=\varepsilon_3a_{32}$ we conclude that $\lambda(A)$ has the form (\ref{eq27}).
\qed
\end{proof}

\begin{remark}
It follows from the latter representation that the map
$$
(a,b,c,\varepsilon_2,\varepsilon_3) \rightarrow \ZG_3
$$
is a well-defined parametrization of the generating factor $\ZG_3$.
\end{remark}
%%%%%%%%%%%%%%%%%%%%%%%%%%%%%%%%%%%

%\end{document}

\section{Construction of solutions}
\label{sec2per}

We consider solutions $z(x,y)$ to (\ref{eq1}) which
given in the following implicit form:
\begin{equation}
\zeta(z(x,y))=\phi(x)\psi(y),
\label{eq31}
\end{equation}
where $\phi,\psi$ and $\zeta$ are some $C^2$-regular functions.
Using (\ref{eq31}), we find
\begin{equation}
\zeta'(z)z_x=\phi'\psi, \quad \zeta'(z)z_y=\phi\psi'
\label{eq32}
\end{equation}
and
\begin{equation}
\label{three}
\zeta''z_x^2+\zeta'z_{xx}=\phi''\psi, \quad
\zeta''z_xz_y+\zeta'z_{xy}=\phi'\psi', \quad
\zeta''z_y^2+\zeta'z_{yy}=\phi\psi''.
\end{equation}
Multiplying  componentwise the latter three equations (\ref{three}) by
$$
(1-z_y^2)\zeta'{}^2 \equiv \zeta'{}^2 - \phi^2\psi'{}^2,
$$
$$
2z_xz_y\zeta'{}^2 \equiv 2\phi\psi\phi'\psi',
$$
$$
(1-z_x^2)\zeta'{}^2 \equiv \zeta'{}^2 - \phi'{}^2\psi^2
$$
and taking the sum, we obtain by virtue of (\ref{eq1}) that
$$
\zeta''\zeta'{}^2(z_x^2+z_y^2) = \zeta'{}^2(\phi''\psi+\phi\psi'')
(\phi\phi''\psi'{}^2-2\phi'{}^2\psi'{}^2 +
\phi'{}^2\psi\psi'')\phi\psi.
$$
Applying $\phi\psi=\zeta$ and (\ref{eq32}), we arrive at
\begin{equation}
(\phi''\psi+\phi\psi'')\zeta'^2 -
(\phi\phi''\psi'^2 - 2\phi'^2\psi'^2 +
\phi'{}^2\psi\psi'')\zeta - (\phi'{}^2\psi^2+\phi^2\psi'{}^2)\zeta''=0.
\label{eq34}
\end{equation}

\smallskip
\begin{lemma}\label{lem_relax}
Let $z(x,y)$ be a solution of (\ref{eq1}) given in the form (\ref{eq31}).
Assume that the following conditions hold
\begin{enumerate}
\item[(i)] $\phi$ and $\psi$ assume zero values at some points;\\
\item[(ii)] there exist $C^1$-functions $P$, $Q$ and $H$ such that
\begin{equation}
\phi'{}^2=P(\phi^2), \quad \psi'{}^2=Q(\psi^2), \quad
\zeta'{}^2=H(\zeta^2).
\label{eq35}
\end{equation}
\end{enumerate}
Then $P$ and $Q$ are quadratic polynomials.
\end{lemma}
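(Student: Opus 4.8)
The plan is to work with equation (\ref{eq34}) together with the substitutions (\ref{eq35}). First I would differentiate the relations $\phi'^2=P(\phi^2)$ with respect to $x$ and $\psi'^2=Q(\psi^2)$ with respect to $y$: this gives $2\phi'\phi''=P'(\phi^2)\cdot 2\phi\phi'$, hence $\phi''=\phi P'(\phi^2)$ wherever $\phi'\ne0$, and by continuity (and condition (i) guaranteeing $\phi$ actually varies) everywhere; similarly $\psi''=\psi Q'(\psi^2)$ and $\zeta''=\zeta H'(\zeta^2)$. Substituting these expressions for the second derivatives, and using $\zeta=\phi\psi$ throughout, turns (\ref{eq34}) into an identity purely in $\phi$, $\psi$ and the functions $P,Q,H$ and their derivatives evaluated at $\phi^2$, $\psi^2$, $\phi^2\psi^2$. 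Explicitly the coefficient of $\zeta'^2=H(\phi^2\psi^2)$ becomes $\phi\psi\big(P'(\phi^2)+Q'(\psi^2)\big)$, the coefficient of $\zeta=\phi\psi$ becomes $\phi^2\psi^2 P'(\phi^2)Q(\psi^2)+\phi^2\psi^2 P(\phi^2)Q'(\psi^2)-2\phi^2\psi^2 P(\phi^2)Q(\psi^2)$ after cancelling a factor, and the last term contributes $\big(\phi^2\psi^2 P(\phi^2)+\phi^2\psi^2 Q(\psi^2)\big)H'(\phi^2\psi^2)$ (modulo a routine bookkeeping of powers of $\phi,\psi$). So after dividing by the common factor $\phi\psi$ one obtains a functional equation of the schematic shape
\begin{equation}
\big(P'(s)+Q'(t)\big)H(st) - R(s,t)\,st - st\big(P(s)+Q(t)\big)H'(st)=0
\label{funct}
\end{equation}
valid for all $s=\phi^2\ge0$, $t=\psi^2\ge0$ in suitable ranges, where $R(s,t)=sP'(s)Q(t)+sP(s)Q'(t)-2sP(s)Q(t)$ (the exact form to be pinned down in the calculation).

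The heart of the argument is then to extract polynomiality of $P$ and $Q$ from (\ref{funct}) by a separation-of-variables / differentiation trick. The natural move is to apply the operator $\partial_s\partial_t$ (or to differentiate once in each variable) to kill the cross structure: terms like $H(st)$ and $H'(st)$ produce factors involving only $st$, while the explicitly separated pieces $P'(s)H(st)$ and $Q(t)H'(st)$ generate constraints linking the growth of $P,Q$ to that of $H$. A cleaner route may be to first use condition (i): since $\phi$ and $\psi$ vanish somewhere, $s$ and $t$ range down to $0$, so I can set $t=0$ in (\ref{funct}) to get a relation between $P$, $H(0)$, $H'(0)$ and $Q(0)$, $Q'(0)$ alone — this should already force $P$ to satisfy a first-order ODE with constant (in $s$) coefficients, e.g. of the form $\alpha P'(s)+\beta P(s)+\gamma=0$ or $sP'(s)=\text{const}\cdot P(s)+\text{const}$, whose only solutions that are genuine $C^1$ functions on a neighborhood of $0$ are affine-in-$s$, i.e. quadratic polynomials in $\phi$. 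By symmetry the same applies to $Q$. I would also feed the result back into (\ref{funct}) to check consistency and, if needed, to nail down $H$ as well, though the statement only asks for $P$ and $Q$.

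The main obstacle I anticipate is twofold. First, the bookkeeping in passing from (\ref{eq34}) to (\ref{funct}) is error-prone: one must carefully track which powers of $\phi$ and $\psi$ (as opposed to $\phi^2,\psi^2$) survive, and verify that everything really does organize into functions of $s,t,st$ only — this is where condition (ii) is doing its work, and a sign slip would derail the separation. Second, and more essentially, after setting $t=0$ one gets an ODE for $P$ whose \emph{coefficients} are built from the unknown values $Q(0),Q'(0),H(0),H'(0)$; I must argue these coefficients are not in a degenerate configuration that would allow non-polynomial $P$ (for instance the trivial case $P\equiv\text{const}$ corresponding to $\phi$ linear must be handled separately, and one should check whether the edge case where the ODE degenerates to $0=0$ can occur — presumably it is excluded by the non-triviality of the solution, i.e. by $\phi'\not\equiv0$ and the normalization forcing $\zeta'\not\equiv0$). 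Granting that the coefficient configuration is non-degenerate, solving the constant-coefficient first-order ODE and invoking $C^1$-regularity at $0$ (which rules out the $s^{\text{non-integer}}$ and $\log$ branches) yields that $P$, hence $\phi'^2$ as a function of $\phi^2$, is a polynomial of degree $\le 2$; likewise for $Q$, completing the proof.
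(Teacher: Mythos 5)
Your overall strategy is the same as the paper's: differentiate (\ref{eq35}) to get $\phi''=P'(\phi^2)\phi$, $\psi''=Q'(\psi^2)\psi$, $\zeta''=H'(\zeta^2)\zeta$, substitute into (\ref{eq34}) to obtain a functional identity in $u=\phi^2$, $v=\psi^2$ (the paper's (\ref{seee})), and then use hypothesis (i) to evaluate at $\phi=0$ and at $\psi=0$, producing first-order linear ODEs for $Q$ and $P$. Up to the bookkeeping you yourself flag (the correct coefficient of $H'$ is $\psi^2P(\phi^2)+\phi^2Q(\psi^2)$, not $\phi^2\psi^2\bigl(P+Q\bigr)$), this is exactly the paper's argument.

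The genuine problem is in your endgame. The equation obtained by setting $\psi=0$ is \emph{not} constant-coefficient; it is
$$
(h_0-q_0u)P'(u)+2q_0P(u)+h_0q_1-h_1q_0u=0,
$$
with $q_0=Q(0)$, $q_1=Q'(0)$, $h_0=H(0)$, $h_1=H'(0)$ (the paper's (\ref{eq36})). Its homogeneous solutions are $C(h_0-q_0u)^2$ and a particular solution is affine in $u$, so the general solution is an arbitrary quadratic polynomial in $u$, i.e. $\phi'^2$ is a \emph{quartic} in $\phi$ with a free leading coefficient --- precisely the form (\ref{eq37}) that later produces the elliptic functions. Neither of the ODE shapes you guess ($\alpha P'+\beta P+\gamma=0$ or $sP'=cP+d$) has polynomial general solutions (they give exponentials, resp. powers $s^c$), so the mechanism you invoke could not deliver the conclusion; and your claim that the answer is ``affine in $s$, i.e.\ quadratic in $\phi$'' misreads the lemma (quadratic means quadratic in the argument $\phi^2$) and would wrongly discard the $\phi^4$ term. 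What actually forces quadratics is the specific linear-in-$u$ structure of both the coefficient of $P'$ and the inhomogeneity, which you have not identified. There are also no non-integer-power or logarithmic branches to exclude (the ODE's singular point sits at $u=h_0/q_0$, not at $u=0$), so no $C^1$-regularity argument at $0$ is needed; the only degeneration is $q_0=0$ (resp.\ $p_0=0$), in which case the equation immediately forces $P$ (resp.\ $Q$) to be linear, still a polynomial of degree at most two.
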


\begin{proof}
We find  from (\ref{eq35}) that
$$
\phi''=P'(\phi^2)\phi, \quad \psi''=Q'(\psi^2)\psi, \quad
\zeta''=H'(\zeta^2)\zeta,
$$
hence, by virtue of (\ref{eq34}) and (\ref{eq31}) we find
\begin{equation}\label{seee}
\begin{split}
&H(\phi^2\psi^2)\left[P'(\phi^2)+Q'(\psi^2)\right] -
H'(\phi^2\psi^2)\left[\psi^2P(\phi^2)+\phi^2Q(\psi^2)\right]\\
&-\left(\phi^2P'(\phi^2)Q(\psi^2) - 2P(\phi^2)Q(\psi^2)
+\psi^2Q'(\psi^2)P(\phi^2)\right) = 0.
\end{split}
\end{equation}
The latter identity holds for any admissible values of $\phi$ and $\psi$ (here regarded as independent variables). By
our assumption, there are $x_0$ and $y_0$ such that
$\phi(x_0)=0$ and $\psi(y_0)=0$. Substituting by turn $\phi=0$ and $\psi=0$ we find two following identites:
\begin{equation}
\begin{split}
&(h_0-p_0v)Q'(v)+2p_0Q(v)+h_0p_1-h_1p_0v=0,\\
&(h_0-q_0u)P'(u)+2q_0P(u)+h_0q_1-h_1q_0u=0,
\end{split}
\label{eq36}
\end{equation}
where $v=\psi^2$, $u=\phi^2$, $p_0=P(0)$, $p_1=P'(0)$, $q_0=Q(0)$, $q_1=Q'(0)$,
$h_0=H(0)$, $h_1=H'(0)$.

If $p_0=0$ ($q_0=0$ resp.) then the function
$Q(v)$ ($P(u)$ resp.) is linear. In the remained case, one can solve the ordinary
differential equations (\ref{eq36}) to obtain
$$
P(u)=au^2+\left(h_1-2a\frac{h_0}{q_0}\right)u-\frac{1}{2q_0}\left(h_0(h_1+q_1)-
2a\frac{h_0^2}{q_0}\right),
$$
$$
Q(v)=bv^2+\left(h_1-2b\frac{h_0}{p_0}\right)v-\frac{1}{2p_0}\left(h_0(h_1+p_1)-
2b\frac{h_0^2}{p_0}\right),
$$
where $a,b\in\RR$ some real constants. The lemma is proved.
\qed

\end{proof}

\begin{remark}
The general solution to (\ref{eq34}) becomes  more extensive if we relax condition (i) in Lemma~\ref{lem_relax}.
For example, by using exponential, both the rotationally symmetric solutions $(\sinh z)^2=x^2+y^2$ and surfaces
of the form (\ref{eq5}) can be brought into the form (\ref{eq31}). But as already mentioned, the former functions
are allowed to be of a lower regularity class in contrast to the real analyticity of solutions to (\ref{eq35}).
\end{remark}

By Lemma~\ref{lem_relax}, we can consider
$\phi$, $\psi$ and $\zeta$ satisfying to the following elliptic equations:
\begin{equation}
\begin{split}
\phi'{}^2&=a_1-2b_1\phi^2+c_1\phi^4,\\
\psi'{}^2&=a_2-2b_2\psi^2+c_2\psi^4,\\
\zeta'{}^2&=c_3+2b_3\zeta^2+a_3\zeta^4,
\end{split}
\label{eq37}
\end{equation}
with $a_i$, $b_i$ and $c_i$ $(i=1,2,3)$ to be chosen later.

Now we show that every solution of (\ref{eq1}) satisfying
(\ref{eq31}) and (\ref{eq37}) can be associated with a certain
generating matrix $A\in \ZM_3$. To this end, we notice that
\begin{equation}
\phi''=2\phi(c_1\phi^2-b_1),\ \psi''=2\psi(c_2\psi^2-b_2),\
\zeta''=2\zeta(a_3\zeta^2+b_3).
\label{eq38}
\end{equation}
By virtue of (\ref{eq37}) and (\ref{eq38}) we obtain from
(\ref{seee}) that
\begin{equation*}
\begin{split}
&(c_1c_2-a_3b_1-a_3b_2)\phi^4\psi^4+(a_3a_2-c_1b_3-c_1b_2)\phi^4\psi^2+
(a_3a_1-b_1c_2-c_2b_3)\phi^2\psi^4\\
&+ (a_1b_3+a_1b_2-c_2c_3)\psi^2 +
(a_2b_1-c_1c_3+a_2b_3)\phi^2 + (b_2c_3+b_1c_3-a_1a_2) = 0.
\end{split}
\label{eqsplit}
\end{equation*}
Since $\phi$ and $\psi$ are algebraically independent,
all the coefficients of $\phi^i\psi^j$ are zero.
Let us write $$\beta_i=b_j+b_k,\qquad (i,j,k)\in S.$$
Then the following matrix satisfies  the generating conditions
\begin{equation}
A=\left(
\begin{array}{ccc}
a_1&\beta_1&c_1\\
a_2&\beta_2&c_2\\
a_3&\beta_3&c_3
\end{array}
\right).
\label{eq39}
\end{equation}
Thus, we have proved:

\begin{theorem}
Let $z(x,y)$ be implicitly defined by
$\zeta(z(x,y))=\phi(x)\psi(y)$ with $\phi$, $\psi$ and $\zeta$
satisfying (\ref{eq37}). Then $z(x,y)$ is a solution of (\ref{eq1}) if and
only if the matrix (\ref{eq39}) is generating.
\label{th31}
\end{theorem}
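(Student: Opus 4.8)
The plan is to verify that Theorem~\ref{th31} follows immediately by combining the computations already displayed in the text with the algebraic characterization of generating matrices from Section~\ref{sec:genmat}. The only technical point that still needs a careful statement is the precise sense in which ``$\phi$ and $\psi$ are algebraically independent''; everything else is bookkeeping.

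First I would record the forward direction. Suppose $z(x,y)$ defined by $\zeta(z)=\phi(x)\psi(y)$ with $\phi,\psi,\zeta$ satisfying (\ref{eq37}) is a solution of (\ref{eq1}). By the derivation leading to (\ref{eq34}) and (\ref{seee}) (which used only (\ref{eq1}) and the implicit relation (\ref{eq31}), not the specific ODEs), the identity (\ref{seee}) holds. Plugging in the second derivatives (\ref{eq38}) computed from (\ref{eq37}) turns (\ref{seee}) into the displayed polynomial relation in $\phi^2$ and $\psi^2$ with the six coefficients shown. Here I would spell out why those coefficients must vanish: along any arc where $\phi'\neq0$ and $\psi'\neq0$, the functions $u=\phi(x)^2$ and $v=\psi(y)^2$ are (locally) independent real-analytic functions of the two variables $x,y$, so a polynomial $\sum c_{ij}u^iv^j$ that vanishes identically on an open set of the $xy$-plane must have all $c_{ij}=0$. (That such arcs exist is guaranteed because (\ref{eq37}) has nonconstant solutions for the parameter values in play; one can also invoke analyticity and the open dense set where the Jacobian $\partial(u,v)/\partial(x,y)=4\phi\phi'\psi\psi'$ is nonzero.) Reading off the six vanishing conditions and using the abbreviation $\beta_i=b_j+b_k$ for $(i,j,k)\in S$, the three ``quartic/quadratic'' coefficients give $c_1c_2=a_3\beta_1$, $a_3a_2=c_1\beta_2$, $a_3a_1=c_2\beta_3$ (after symmetrizing, noting $b_1+b_2=\beta_3$ etc.), and the three lower-order ones give $a_1\beta_3=c_2c_3$, $a_2\beta_1=c_1c_3$, $\beta_2c_3=a_1a_2$. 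These six equations are exactly the statement that all $2\times2$ minors of the associated matrix $A'$ of (\ref{eq22}) built from $A$ in (\ref{eq39}) vanish, i.e. ${\rm rank}\,A'\le1$, i.e. (\ref{eq21}) holds; hence $A$ is generating.

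For the converse, I would simply run the same chain of equivalences backwards. If $A$ of (\ref{eq39}) is generating, then by the equivalence ``(\ref{eq21}) $\Leftrightarrow$ all second-order minors of $A'$ vanish'' the six scalar identities above hold; substituting (\ref{eq37})--(\ref{eq38}) into (\ref{seee}) we then get that the left-hand side of (\ref{seee}) vanishes identically, which is precisely the condition (equivalent, via the reversible manipulations that produced (\ref{eq34}) and then (\ref{seee}) from (\ref{eq1}) under the ansatz (\ref{eq31})) for $z(x,y)$ to satisfy (\ref{eq1}) wherever it is $C^2$. Since $\zeta'\ne0$ generically (again by (\ref{eq37})), the divisions performed in passing from (\ref{three}) to (\ref{eq34}) are legitimate on a dense open set, and by continuity the PDE (\ref{eq1}) holds everywhere the surface is regular.

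The main obstacle is not any single computation but making the ``algebraic independence'' step rigorous: one must ensure that the polynomial identity in $(u,v)=(\phi^2,\psi^2)$ genuinely forces coefficient-wise vanishing, which requires knowing the image of $(x,y)\mapsto(\phi(x)^2,\psi(y)^2)$ has nonempty interior in $\R^2$ (equivalently, that $\phi$ and $\psi$ are nonconstant). I would dispatch this by noting that the elliptic equations (\ref{eq37}) with the parameters arising from an elliptic generating matrix have nonconstant (indeed real-analytic, locally Jacobi-elliptic) solutions — the explicit examples (\ref{oneper}), (\ref{sinsin1}) already exhibit this — so the degenerate cases $\phi\equiv{\rm const}$ or $\psi\equiv{\rm const}$ are excluded from the outset, and on the open set where $\phi'\psi'\ne0$ the map has nonzero Jacobian. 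With that settled, everything reduces to the matching of the six quadratic relations with the rank-one condition, which is a direct (if slightly tedious) comparison already half-done in the lines preceding the theorem.
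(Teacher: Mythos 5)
Your proposal is correct and follows essentially the same route as the paper: the paper's ``proof'' is precisely the computation preceding the theorem (derive (\ref{eq34}), substitute (\ref{eq37})--(\ref{eq38}) to get the displayed polynomial identity in $\phi^2,\psi^2$, invoke algebraic independence to kill the coefficients, and read off the generating conditions for (\ref{eq39})), and your extra care about why $(\phi^2,\psi^2)$ fills an open set is a genuine improvement on the paper's one-line appeal to ``algebraic independence.'' One small caveat, shared with the paper: the six vanishing coefficients give only six of the nine $2\times 2$ minors of $A'$ (namely $a_i\beta_i=c_jc_k$ and $c_i\beta_i=a_ja_k$, but not $a_ic_i=\beta_j\beta_k$), so strictly speaking a short additional argument (or a nondegeneracy hypothesis excluding constant $\phi,\psi,\zeta$) is needed to conclude $\mathrm{rank}\,A'=1$ in the ``only if'' direction.
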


\begin{remark}
We do not distinguish particular solutions to (\ref{eq37}) because they generate two surfaces which differ only by a shift in $\R_1^3$.
\end{remark}

\begin{remark}\label{rem3}
Notice that if a surface is given implicitly by (\ref{eq31}) with  $\phi$, $\psi$ and $\zeta$ satisfying the system (\ref{eq37}) with a generating matrix $A$ then the new set $\alpha_1\phi$, $\alpha_2\psi$ and $\alpha_1\alpha_2\zeta$ with a generating matrix $A_1$ results to the same surface. Recalling the equivalence relation defined in Section~\ref{sec:genmat}, we find that the new generating matrix is $A_1=\lambda(A)$, where $\lambda=(\alpha_1^{-2},\alpha_2^{-2})$. This shows that  two equivalent elements in $\ZM_3$ generate one surface.
\end{remark}

\begin{lemma}
The discriminant $\Delta(A)$ of the generating matrix~(\ref{eq39})
has the following forms
$$
\Delta(A)\equiv b_i^2-a_ic_i=-(b_1b_2+b_3\beta_3),
$$
where $b_i=\frac{1}{2}(\beta_j+\beta_k-\beta_i)$ for any $(i,j,k)\in S$.
In particular, it doesn't depend on $i=1,2,3;$
\label{lem32}
\end{lemma}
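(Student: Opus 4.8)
The plan is to exploit the structure imposed on $A$ in \eqref{eq39} by the generating conditions, namely that the six coefficient equations obtained from the $\phi^i\psi^j$ expansion all vanish. We have a matrix whose entries are $a_i,\beta_i,c_i$ with $\beta_i=b_j+b_k$; the middle column entries are therefore not free, and the point is that this dependence is precisely what makes $\Delta(A)=b_i^2-a_ic_i$ a well-defined scalar and lets us rewrite it in the symmetric form $-(b_1b_2+b_3\beta_3)$. First I would record that, by the previous lemma, $\Delta_\alpha$ depends on $\alpha$ alone, and by definition $\Delta(A)=\tfrac14\Delta_2$; applying the displayed formula for $\Delta_\alpha$ to the middle column ($\alpha=2$, the column $(\beta_1,\beta_2,\beta_3)^T$) and to one particular choice of $(i,j,k)$ gives $4\Delta(A)=(\beta_j+\beta_k-\beta_i)^2-4a_ic_i$. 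Since $\beta_j+\beta_k-\beta_i=(b_k+b_i)+(b_i+b_j)-(b_j+b_k)=2b_i$, this is exactly $4(b_i^2-a_ic_i)$, so the first claimed identity $\Delta(A)=b_i^2-a_ic_i$ is immediate, and its independence of $i$ is just the content of the preceding lemma.

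The substantive part is the second equality $b_i^2-a_ic_i=-(b_1b_2+b_3\beta_3)$. Here I would go back to the six vanishing-coefficient relations displayed just before \eqref{eq39}. Writing them with the $\beta_i$ notation, the coefficient of $\psi^2$ gives $a_1\beta_1-c_2c_3=0$ and the coefficient of $\phi^2$ gives $a_2\beta_2-c_1c_3=0$ (here I am using $\beta_1=b_2+b_3$, $\beta_2=b_1+b_3$); the constant term gives $b_1c_3+b_2c_3-a_1a_2=0$, i.e. $\beta_3 c_3=a_1a_2$. Similarly the three top-degree coefficients give the "dual" relations $a_1 c_2+a_1 c_1-\ldots$ — more precisely $c_1c_2=a_3\beta_3$, $a_3a_2=c_1\beta_1$, $a_3a_1=c_2\beta_2$ after collecting. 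From these I can solve for the products $a_ic_i$ in two complementary ways and compare. The cleanest route I foresee: pick $i=3$, so I must show $b_3^2-a_3c_3=-(b_1b_2+b_3\beta_3)$, i.e. $a_3c_3=b_3^2+b_1b_2+b_3\beta_3=b_3^2+b_1b_2+b_3(b_1+b_2)=(b_3+b_1)(b_3+b_2)=\beta_2\beta_1$. So the target reduces to the clean statement $a_3c_3=\beta_1\beta_2$. But multiplying the two relations $a_3a_2=c_1\beta_1$ and $a_3a_1=c_2\beta_2$ gives $a_3^2 a_1 a_2=c_1c_2\beta_1\beta_2$, and substituting $a_1a_2=\beta_3c_3$ and $c_1c_2=a_3\beta_3$ yields $a_3^2\beta_3 c_3=a_3\beta_3\beta_1\beta_2$, hence $a_3 c_3=\beta_1\beta_2$ provided $a_3\beta_3\neq0$.

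The main obstacle is therefore the degenerate cases where some of the $a_i$, $c_i$ or $\beta_i$ vanish, so that the cancellation $a_3^2\beta_3c_3=a_3\beta_3\beta_1\beta_2\Rightarrow a_3c_3=\beta_1\beta_2$ is not licensed. I would handle this by a direct argument rather than division: instead of the choice $i=3$ I am free to use any $(i,j,k)\in S$, and the independence of $\Delta(A)$ of $i$ (already proved) means it suffices to verify $b_i^2-a_ic_i=-(b_1b_2+b_3\beta_3)$ for one $i$ for which the relevant quantities are nonzero; if \emph{all} choices are degenerate then the matrix has enough vanishing entries that one invokes the classification of parabolic generating matrices (the earlier lemma) and checks the three normal forms by hand — in each of them several of $a_i,b_i,c_i$ are zero and both sides of the identity reduce to the same explicit expression. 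Alternatively, and more uniformly, one can simply treat the identity $b_i^2-a_ic_i+b_1b_2+b_3\beta_3=0$ as a polynomial identity in the free parameters $p_i,q_i,r_i$ coming from the representation \eqref{eq23}: substitute \eqref{eq23} into both $\Delta(A)=b_i^2-a_ic_i$ (for $i=2$, say, where $b_2=\tfrac12(\beta_1+\beta_3-\beta_2)$) and into $-(b_1b_2+b_3\beta_3)$, expand, and observe the two polynomials coincide. Since \eqref{eq23} parametrizes \emph{all} generating matrices (not merely the elliptic ones), this closes every case at once, and I would present this polynomial-identity verification as the proof, relegating the division argument above to a motivating remark.
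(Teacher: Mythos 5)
Your first step --- applying the formula for $\Delta_\alpha$ to the middle column to get $\Delta(A)=b_i^2-a_ic_i$, with independence of $i$ supplied by the preceding lemma --- is exactly the paper's. For the second identity, however, you take a genuinely longer road than the paper, and the detour is the source of all your degenerate-case worries. The relation you ultimately need, $a_ic_i=\beta_j\beta_k$, does not have to be manufactured from the six vanishing-coefficient relations by multiplying and dividing: it is itself one of the defining identities (\ref{eq21}) of a generating matrix (take $(\alpha,\beta,\gamma)=(1,3,2)$, so that $a_{i1}a_{i3}=a_{j2}a_{k2}$, which for the matrix (\ref{eq39}) reads $a_ic_i=\beta_j\beta_k$), and the lemma's hypothesis is precisely that (\ref{eq39}) \emph{is} generating. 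The paper accordingly just writes $\Delta(A)=b_1^2-a_1c_1=b_1^2-\beta_2\beta_3=b_1^2-(b_1+b_3)(b_1+b_2)=-b_1b_2-b_3\beta_3$, which is the same elementary computation you reduce to via $a_3c_3=\beta_1\beta_2$. Because you instead derive $a_3c_3=\beta_1\beta_2$ from the other six relations, you are forced to divide by $a_3\beta_3$ and then patch the degenerate cases. Of your two remedies, the polynomial-identity check via the parametrization (\ref{eq23}) is sound and covers all cases at once (indeed $a_ic_i$ and $\beta_j\beta_k$ are visibly the same monomial in the $p$'s, $q$'s, $r$'s there); the appeal to the parabolic classification is shakier, since that classification is only stated up to permutations of rows and columns, which scramble the roles of $a_i$, $\beta_i$, $c_i$ and the relation $\beta_i=b_j+b_k$ attached to the middle column. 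In short: your proof is correct as finally presented, but it rederives, with extra effort, a relation that the hypothesis already hands you.
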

\begin{proof} The identity  immediately follows from the definition of the
discriminant $\Delta(A)$. Taking into account the generating matrix
definition~(\ref{eq39}) we obtain
$$
\Delta(A)=b_1^2-a_1c_1=b_1^2-\beta_2\beta_3=b_1^2-(b_1+b_3)\beta_3=
b_1(b_1-\beta_3)-b_3\beta_3=-b_1b_2-b_3\beta_3.
$$
\end{proof}

\begin{remark}
We notice that for non-zero discriminant $\Delta(A)$, the equations (\ref{eq37}) define the Jacobi elliptic functions. These functions are real analytic and single-periodic for the real values of variables $x,y$ and $z$. Some concrete examples can be found in Section~\ref{raz6} below.
\end{remark}

\begin{remark}\label{remtang}
We finish this section by showing that solutions with zero discriminant $\Delta(A)$ can not have isolated singularities. Indeed, if $\Delta(A)=0$,  Lemma~\ref{lem32} implies that all expressions in the right hand side of (\ref{eq37}) are perfect squares, so that (\ref{eq37}) is equivalent to a simpler system
\begin{equation}
\begin{split}
\phi'{}&=\alpha_1-\gamma_1\phi^2,\\
\psi'{}&=\alpha_2-\gamma_2\psi^2,\\
\zeta'{}&=\gamma_3+\alpha_3\zeta^2,
\end{split}
\label{eq371}
\end{equation}
where $\alpha_i^2=a_i$, $\gamma_i^2=c_i$, $b_i=\alpha_i\gamma_i$ with
the generating conditions
\begin{equation}\label{tanh}
\frac{1}{b_1}+\frac{1}{b_2}+\frac{1}{b_3}=0, \qquad b_1b_2b_3=-\alpha_1^2\alpha_2^2\alpha_3^2
\end{equation}
Solutions of (\ref{eq371}) are hyperbolic or trigonometric tangents, which results to surfaces of Scherk's type. We confine yourself by mentioning only a particular example of the above family for which the resulting surface is an entire solution
\begin{equation}\label{tang}
\tanh x\tanh y = \tanh \frac{z}{\sqrt{2}}.
\end{equation}
\end{remark}
This is a mixed type solution without singularities. The critical level set $|\nabla u|=1$ consists of union of hyperbola-like curves
$$
\tanh^2 x \tanh^2 y +1=2\tanh^2 x, \quad \tanh^2 x \tanh^2 y +1=2\tanh^2 y.
$$

\section{Singular points of two-periodic solutions}

In this section we treat isolated singularities of solutions defined by (\ref{eq31}). In what follows we shall assume that $\Delta(A)\ne0$ (see Remark~\ref{remtang} for zero discriminant solutions).

Let $u$ be  a solution generated by some generating matrix $A\in \ZM_3$ given by (\ref{eq39}). Then the graph $z=u(x,y)$ is defined by the implicit equation
\begin{equation}\label{deF}
F(x,y,z)\equiv \zeta(z)-\phi(x)\psi(y)=0,
\end{equation}
where $\phi$, $\psi$ and $\zeta$ are defined by (\ref{eq37}).
Let $(x_0,y_0,z_0)$ be a point on the graph, i.e. $F(x_0,y_0,z_0)=0$. By the inverse function theorem, the sufficient condition for $u$ to be well-defined and smooth (in fact real analytic) in a neighborhood of the point $(x_0,y_0,z_0)$ is  $\frac{\partial F}{\partial z}(x_0,y_0,z_0)\ne 0$.
We call a point $(x_0,y_0,z_0)$ \textit{special} if  $$\frac{\partial F}{\partial z}(x_0,y_0,z_0)=F(x_0,y_0,z_0)=0.$$
This then can be rewritten as
\begin{equation}
\begin{split}
c_3+2b_3\zeta^2(z_0)+a_3\zeta^4(z_0)&=0, \qquad \zeta(z_0)=\phi(x_0)\psi(y_0),
\end{split}
\label{eq42}
\end{equation}
Notice that a special point need not be \textit{a priori} a genuine  singularity.
We call a special point $m_0=(x_0,y_0)$ \textit{nonremovable singularity} if $u$ is a $C^2$-function in a punctured neighbourhood of $m_0$ but it has no a $C^2$-continuation at $m_0$.

\begin{theorem}
Let  $m_0=(x_0,y_0)$ be an isolated nonremovable singularity of  $u$
associated with the generating matrix $A$ given by (\ref{eq39}). Then $\Delta(A)>0$ and $\phi'(x_0)=\psi'(y_0)=\zeta'(z_0)=0$ (where $z_0=u(x_0,y_0)$). Moreover,
\begin{equation}\label{mmmm}
\phi^2(x_0)=\frac{b_1+\delta\sqrt{\Delta(A)}}{c_1}, \qquad
\psi^2(y_0)=\frac{b_2+\delta\sqrt{\Delta(A)}}{c_2},
\end{equation}
where $\delta^2=1$.
\label{th41}
\end{theorem}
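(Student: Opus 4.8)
The strategy is to extract algebraic constraints at a special point from the light-cone behavior of isolated singularities and then solve those constraints explicitly. First I would observe that since $m_0$ is a nonremovable singularity, the implicit function theorem fails there, so $(x_0,y_0,z_0)$ with $z_0=u(x_0,y_0)$ is a special point: equations (\ref{eq42}) hold. I would then invoke the asymptotic result (\ref{eq4}) of Ecker and Klyachin--Miklyukov: near an isolated singularity, $u(x)-u(m_0)=\varepsilon|x-m_0|+o(|x-m_0|)$ with $\varepsilon^2=1$. This means the graph has a genuine light-cone-type singularity, hence approaching $m_0$ the gradient satisfies $|\nabla u|\to 1$. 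Translating this into the implicit picture, at $(x_0,y_0,z_0)$ \emph{all three} partials of $F$ must vanish; if only $F_z=0$ but $\nabla_{x,y}F\ne0$, the level set would be a smooth curve through $(x_0,y_0,z_0)$ and $u$ would behave like a function with a vertical tangent plane along a curve (a fold), not an isolated point singularity. So $F_x(x_0,y_0,z_0)=F_y(x_0,y_0,z_0)=0$ as well, which by (\ref{eq32}) and (\ref{deF}) reads
\begin{equation*}
\phi'(x_0)\psi(y_0)=0,\qquad \phi(x_0)\psi'(y_0)=0,\qquad \zeta'(z_0)=0.
\end{equation*}

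Next I would rule out the degenerate cases of these products. If $\psi(y_0)=0$ then $\zeta(z_0)=\phi(x_0)\psi(y_0)=0$, and then (\ref{eq42}) gives $c_3=0$; combined with $\zeta'(z_0)^2=c_3+2b_3\zeta^2+a_3\zeta^4=0$ this forces a degeneracy one checks leads either to $\Delta(A)=0$ (excluded by hypothesis via Lemma~\ref{lem32}, or rather to be derived) or to $u$ not having an isolated singularity — here I would lean on the analysis behind Remark~\ref{remtang}, that a vanishing-discriminant situation produces Scherk-type surfaces without isolated singularities, and on the structure of the level set $G_u$. The clean outcome is that $\phi(x_0)\ne0$ and $\psi(y_0)\ne0$, whence necessarily $\phi'(x_0)=0$ and $\psi'(y_0)=0$, and correspondingly $\zeta'(z_0)=0$ already noted. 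So from (\ref{eq37}):
\begin{equation*}
a_1-2b_1\phi^2(x_0)+c_1\phi^4(x_0)=0,\qquad a_2-2b_2\psi^2(y_0)+c_2\psi^4(y_0)=0.
\end{equation*}
Solving these quadratics in $\phi^2(x_0)$ and $\psi^2(y_0)$ gives
\begin{equation*}
\phi^2(x_0)=\frac{b_1\pm\sqrt{b_1^2-a_1c_1}}{c_1},\qquad \psi^2(y_0)=\frac{b_2\pm\sqrt{b_2^2-a_2c_2}}{c_2},
\end{equation*}
and by Lemma~\ref{lem32} the discriminants $b_i^2-a_ic_i$ all equal $\Delta(A)$, yielding (\ref{mmmm}) with $\delta^2=1$.

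It remains to pin down two things: that $\Delta(A)>0$, and that the \emph{same} sign $\delta$ appears in both formulas. For $\Delta(A)>0$: if $\Delta(A)<0$ the square roots above are imaginary while $\phi(x_0),\psi(y_0)$ are real, a contradiction unless the roots are used differently — more carefully, $\phi'(x_0)^2=0$ with $\phi$ real-analytic and satisfying a real ODE forces the quartic $a_1-2b_1t^2+c_1t^2$... to have the real double-type structure, and one shows negativity of $\Delta(A)$ is incompatible with existence of a real zero of $\phi'$ at which $\phi\ne0$ together with $\zeta'(z_0)=0$ via the third equation in (\ref{eq37}); I would also use $\Delta(A)=-(b_1b_2+b_3\beta_3)$ and the generating relations to cross-check signs. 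For the matching of $\delta$: substitute $\zeta(z_0)=\phi(x_0)\psi(y_0)$ into the special-point equation (\ref{eq42}), i.e. $c_3+2b_3\zeta^2(z_0)+a_3\zeta^4(z_0)=0$ with $\zeta^2(z_0)=\phi^2(x_0)\psi^2(y_0)$, and show that only the choice of equal signs $\delta_1=\delta_2=\delta$ is consistent with this relation and with $\zeta'(z_0)=0$; this is where the generating identities (\ref{eq21}), (\ref{eq23}) enter decisively, since they tie $a_3,b_3,c_3$ to the $p_i,q_i,r_i$ and hence to $\phi^2(x_0),\psi^2(y_0)$. \textbf{The main obstacle} I anticipate is precisely this last compatibility computation — verifying that the third equation of (\ref{eq37}) at $z_0$, once $\phi^2(x_0)$ and $\psi^2(y_0)$ are substituted with independent signs $\delta_1,\delta_2$, collapses (using the generating structure) to force $\delta_1=\delta_2$ and simultaneously $\Delta(A)>0$; everything before that is routine differentiation and quadratic-formula bookkeeping, but this step needs the full algebraic identity content of the generating matrix.
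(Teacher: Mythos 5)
Your skeleton matches the paper's proof (nonremovable singularity $\Rightarrow$ special point; rule out $\phi(x_0)\psi(y_0)=0$; deduce $\phi'(x_0)=\psi'(y_0)=0$; solve the quadratics from (\ref{eq37}) and invoke Lemma~\ref{lem32}), but two steps are not sound as written. First, you cannot invoke the Ecker / Klyachin--Miklyukov asymptotic (\ref{eq4}) here: that result is stated only for \emph{space-like} solutions in a punctured neighborhood, whereas the solutions of this paper may be of mixed type near $m_0$ — the paper explicitly says the mixed-type case is open, and the light-cone behavior for these solutions is precisely Theorem~\ref{th42}, which is proved \emph{from} Theorem~\ref{th41}; so this route is circular. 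Your fallback is the right one and is what the paper does: if, say, $F_x(x_0,y_0,z_0)=-\phi'(x_0)\psi(y_0)\ne0$, the zero set of $F$ is a smooth \emph{surface} (not a curve) near the point, a graph in the $x$-direction; it coincides with the graph of $u$ on a punctured neighborhood and forces a $C^2$-continuation at $m_0$, contradicting nonremovability. Note this argument needs $\psi(y_0)\ne0$ as input, so the order of steps matters.

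Second, and this is the genuine gap: you do not actually exclude $\phi(x_0)=0$ (equivalently $\zeta(z_0)=0$). Your suggestion that this case "leads either to $\Delta(A)=0$ or to $u$ not having an isolated singularity," with a pointer to Remark~\ref{remtang}, does not work: $\zeta(z_0)=0$ and (\ref{eq42}) give $c_3=0$, whence $\Delta(A)=b_3^2-a_3c_3=b_3^2$, which need not vanish, so the zero-discriminant analysis is irrelevant. The needed argument is: since $\Delta(A)\ne0$ (standing assumption of the section) we get $b_3\ne0$; integrating $\zeta'^2=2b_3\zeta^2+a_3\zeta^4$ shows $\zeta$ must be one of $e^{\mu z}$, $1/\sin(\mu z+\lambda)$, $1/\sinh(\mu z+\lambda)$, $1/\cosh(\mu z+\lambda)$, none of which has a zero — contradicting $\zeta(z_0)=0$. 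Two smaller remarks: your derivation of $\Delta(A)>0$ (real root of a quadratic with real coefficients forces nonnegative discriminant, plus $\Delta(A)\ne0$) is essentially the paper's, applied to (\ref{eq42}); and the sign-matching of $\delta$ that you flag as the main obstacle is in fact glossed over by the paper itself — your proposed compatibility check, substituting $\zeta_0^2=\phi_0^2\psi_0^2$ into (\ref{eq42}) and using $c_1c_2=a_3\beta_3$ together with $\Delta(A)=-b_1b_2-b_3\beta_3$, does close it and is a point where your plan is more careful than the original.
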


\begin{proof}
Since $m_0$ is a special point, equation (\ref{eq42}) has real roots, thus, its discriminant $b_3^2-a_3c_3$ is non-negative. By Lemma ~\ref{lem32} we conclude that $\Delta(A)\equiv b_3^2-a_3c_3\geq 0$. By our agreement, $\Delta(A)\ne0$, therefore $\Delta(A)>0$.

First we prove that $\phi(x_0)\psi(y_0)\neq 0$. Indeed, let for instance  $\phi(x_0)=0$.
Then
$$
\zeta(z_0)=\phi(x_0)\psi(y_0)=0,
$$
and by (\ref{eq42}): $c_3=0$. Since $\Delta(A)>0$ we obtain $b_3\ne 0$.
Integrating $\zeta'{}^2=2b_3\zeta^2+a_3\zeta^4$ we find that $\zeta(z)$ can be one of the following functions:
$e^{\mu z}$, $1/\sin(\mu z+\lambda)$, $1/ \sinh(\mu z+\lambda)$ or $1/\cosh(\mu
z+\lambda)$. But all these functions have no zeros which contradicts to $\zeta(z_0)=0$. Hence $\phi(x_0)\psi(y_0)\neq 0$.

Now we claim that
\begin{equation}\label{short}
\phi'(x_0)=\psi'(y_0)=0.
\end{equation}
Indeed, since $m_0$ is an isolated singularity there is a punctured neighborhood of $m_0$ where $z=u(x,y)$ defined by (\ref{deF}) is a smooth function which can not be extended to a $C^2$-function at $m_0$. Assume for instance $\phi'(x_0)\ne0$. Then this inequality together with the proven above $\psi(y_0)\ne 0$ implies (by the inversion function theorem) that the equation
$$
\zeta(z)=\phi(x)\psi(y)
$$
defines a smooth two-dimensional surface $U$  in $\R^3$ near $(x_0,y_0,z_0)$ which is a graph with respect to the $x$-direction. On the other hand,  this surface coincides with our solution $z=u(x,y)$ in an (eventual smaller) punctured neighborhood of $m_0$. But in that case, $u(x,y)$ admits a smooth continuation at $m_0$ (given be the surface $U$). The contradition shows that (\ref{short}) is true. Combining (\ref{eq37}) and (\ref{short}) we obtain (\ref{mmmm}), where $\delta^2=1$.

%We finish the proof by showing that $\zeta(z_0)=\phi(x_0)\psi(y_0)$. Indeed, by using the generating condition
%$
%c_1c_2=a_3\beta_3,
%$
%we find
%\begin{equation*}
%\begin{split}
%\phi^2(x_0)\psi^2(y_0) =&
%\frac{1}{c_1c_2}(b_1+\delta\sqrt{\Delta(A)})(b_2+\delta\sqrt{\Delta(A)}) =
%\\
%&=\frac{1}{a_3\beta_3}(b_1b_2+\beta_3\delta\sqrt{\Delta(A)}+\Delta(A))\\
%&=\frac{b_1b_2+\Delta(A)}{a_3\beta_3}+\frac{\delta}{a_3}\sqrt{\Delta(A)}.
%\end{split}
%\end{equation*}
%Using Lemma~\ref{lem32} we have $\Delta(A)=-b_1b_2-b_3\beta_3$, which yields
%$$
%\phi^2(x_0)\psi^2(y_0)=\frac{-b_3+\delta\sqrt{\Delta(A)}}{a_3}.
%$$
%It's easy to check that $\zeta_0$ is a solution to last equation
%of the system~(\ref{eq42}). The theorem is proved.

\qed
\end{proof}

\medskip
The following theorem shows that at an isolated nonremovable singularity the constructed surfaces  have the light cone behaviour.
We shall  assume without loss of generality that the singularity is located at the origin: $x_0=y_0=z_0=0$.

\begin{theorem}
Let  $(0,0)$ be an isolated nonremovable singularity of  $z=u(x,y)$
associated with the generating matrix $A$ given by (\ref{eq39}). Then
\begin{equation}\label{assym}
u(x,y)=\delta \sqrt{x^2+y^2}+O(x^2+y^2),
\quad (x,y)\rightarrow (0,0),
\end{equation}
where $\delta^2=1$.
\label{th42}
\end{theorem}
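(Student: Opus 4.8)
The plan is to extract the leading-order behaviour of $u$ near the singular point from the three elliptic ODEs in (\ref{eq37}) together with the conclusions of Theorem~\ref{th41}. By Theorem~\ref{th41} we know $\phi'(0)=\psi'(0)=\zeta'(0)=0$ (after normalizing $x_0=y_0=z_0=0$), and we know the critical values $\phi^2(0)=(b_1+\delta\sqrt{\Delta})/c_1=:\phi_0^2$, $\psi^2(0)=(b_2+\delta\sqrt{\Delta})/c_2=:\psi_0^2$, and correspondingly $\zeta^2(0)=\phi_0^2\psi_0^2$ solves $c_3+2b_3\zeta^2+a_3\zeta^4=0$. First I would Taylor-expand each of $\phi$, $\psi$, $\zeta$ about its critical point. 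Since e.g. $\phi'^2=c_1\phi^4-2b_1\phi^2+a_1$ vanishes to first order at $\phi=\phi_0$ but the right-hand side has a simple zero there (generically — this needs the discriminant to be nonzero, which is exactly $\Delta(A)\ne0$), differentiating gives $\phi''(0)=2\phi_0(c_1\phi_0^2-b_1)=2\delta\sqrt{\Delta}\,\phi_0/\text{(something)}$; in any case $\phi(x)=\phi_0+\tfrac12\phi''(0)x^2+O(x^4)$ with $\phi''(0)\ne0$, and similarly $\psi(y)=\psi_0+\tfrac12\psi''(0)y^2+O(y^4)$ and $\zeta(z)=\zeta_0+\tfrac12\zeta''(0)z^2+O(z^4)$ with $\zeta''(0)\ne0$.

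Next I would substitute these expansions into the defining relation $\zeta(z)=\phi(x)\psi(y)$. The constant terms match by construction ($\zeta_0=\phi_0\psi_0$). Collecting the quadratic terms gives
\begin{equation}
\tfrac12\zeta''(0)\,z^2 = \tfrac12\phi''(0)\psi_0\,x^2 + \tfrac12\psi''(0)\phi_0\,y^2 + O(4),
\end{equation}
so that
\begin{equation}
z^2 = \frac{\phi''(0)\psi_0}{\zeta''(0)}\,x^2 + \frac{\psi''(0)\phi_0}{\zeta''(0)}\,y^2 + O(x^2+y^2)^2.
\end{equation}
The key computational claim is that both coefficients on the right equal $1$, i.e.
\begin{equation}
\phi''(0)\psi_0=\psi''(0)\phi_0=\zeta''(0).
\end{equation}
This is where the generating-matrix identities (\ref{eq21})–(\ref{eq23}) and the formulas $\beta_i=b_j+b_k$, $\Delta(A)\equiv b_i^2-a_ic_i$ of Lemma~\ref{lem32} do the real work: one has $\phi''(0)=2\phi_0(c_1\phi_0^2-b_1)$ and from (\ref{mmmm}), $c_1\phi_0^2-b_1=\delta\sqrt{\Delta}$, hence $\phi''(0)=2\delta\sqrt{\Delta}\,\phi_0$, and likewise $\psi''(0)=2\delta\sqrt{\Delta}\,\psi_0$ and $\zeta''(0)=2\zeta_0(a_3\zeta_0^2+b_3)$; plugging $\zeta_0=\phi_0\psi_0$ and using that $\zeta_0^2$ solves $a_3\zeta^4+2b_3\zeta^2+c_3=0$ together with $\Delta=b_3^2-a_3c_3$ one gets $a_3\zeta_0^2+b_3=\pm\sqrt{\Delta}$, and the sign works out to give $\zeta''(0)=2\delta\sqrt{\Delta}\,\phi_0\psi_0=\phi''(0)\psi_0=\psi''(0)\phi_0$. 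Once this is established, $z^2=x^2+y^2+O(x^2+y^2)^2$, and taking square roots (the branch being fixed by $\delta=\mathrm{sign}$ of the surface above the singularity, consistent with $\zeta''(0)$ having sign $\delta$) yields (\ref{assym}).

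The main obstacle I anticipate is precisely verifying the identity $\phi''(0)\psi_0=\psi''(0)\phi_0=\zeta''(0)$ — i.e. that the quadratic form has coefficients exactly $1$ and not merely nonzero constants. This is not a mere bookkeeping exercise: it forces one to use the full strength of the generating conditions relating $(a_1,\beta_1,c_1)$, $(a_2,\beta_2,c_2)$, $(a_3,\beta_3,c_3)$, not just the discriminant equalities, and to track the sign $\delta$ carefully through the three square-root extractions. A secondary technical point is justifying that the $O$-terms are genuinely $O(x^2+y^2)$ after taking the square root: near an isolated singularity the surface is a graph in the punctured disc, $z^2=(x^2+y^2)(1+O(x^2+y^2))$, so $\sqrt{z^2}=\sqrt{x^2+y^2}\,(1+O(x^2+y^2))=\sqrt{x^2+y^2}+O((x^2+y^2)^{3/2})$, which is $O(x^2+y^2)$; one should also note that $u$ does not change sign in the punctured neighbourhood (else it would cross zero and, being continuous with an isolated singularity, would be $C^2$ there), so the single branch $\delta\sqrt{x^2+y^2}$ is the correct one throughout.
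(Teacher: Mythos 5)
Your proposal is correct and follows essentially the same route as the paper: Taylor-expand $\phi,\psi,\zeta$ at the critical point using Theorem~\ref{th41}, reduce to the identity $\psi_0\phi''(0)=\phi_0\psi''(0)=\zeta''(0)$ (the paper's $A=B=C$), and verify it via $c_1\phi_0^2-b_1=c_2\psi_0^2-b_2=a_3\zeta_0^2+b_3=\delta\sqrt{\Delta(A)}$ using the generating condition $c_1c_2=a_3\beta_3$ and Lemma~\ref{lem32}. The one step you flag as "the sign works out" is exactly the computation the paper carries out explicitly, so nothing essential is missing.
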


\begin{proof}
We have $\phi(0)\psi(0)=\zeta(0)$ and by Theorem~\ref{th41},
$$\phi'(0)=\psi'(0)=\zeta'(0)=0.
$$
We have as an immediate consequence of (\ref{eq37}) that  $\phi^{(k)}(0)=\psi^{(k)}(0)=\zeta^{(k)}(0)=0$ for odd $k$.
Finding then the Taylor expansion of the second order for (\ref{deF}) near the point $(0,0,0)$ we obtain
\begin{equation}
\begin{split}
\frac{z^2\zeta''_0}{2}-\frac{\psi_0\phi''_0x^2+
\phi_0\psi''_0y^2}{2}=
z^4h(z)+O((x^2+y^2)^2),
\label{eq44}
\end{split}
\end{equation}
where $h(z)$ is a bounded function at $z=0$  and  we denote $\phi_0=\phi(x_0)$, $\phi''_0=\phi''(x_0)$ etc.
Rewrite this as follows
$$
Az^2=Bx^2+Cy^2+z^4h(z)+O((x^2+y^2)^2),
$$
where
\begin{equation}
\begin{split}
A&=\zeta''_0/2=\zeta_0\left(a_3\zeta^2_0+b_3\right),\\
B&=\psi_0\phi''_0/2=\zeta_0\left(c_1\phi_0^2-b_1\right),\\
C&=\phi_0\psi''_0/2=\zeta_0\left(c_2\psi^2_0-b_2\right).
\label{eq45}
\end{split}
\end{equation}
Applying again Theorem \ref{th41} we find
$$
\phi_0^2=(b_1+\delta\sqrt{\Delta(A)})/c_1, \quad
\psi_0^2=(b_2+\delta\sqrt{\Delta(A)})/c_2, \quad
%\zeta_0^2=(-b_3+\delta\sqrt{\Delta(A)})/a_3,
$$
where $\delta^2=1$. By using the generating condition
$
c_1c_2=a_3\beta_3=a_3(b_1+b_2),
$
we find
\begin{equation*}
\begin{split}
\zeta_0^2=\phi^2_0\psi^2_0 &
=\frac{1}{a_3\beta_3}(b_1b_2+\beta_3\delta\sqrt{\Delta(A)}+\Delta(A))
=\frac{b_1b_2+\Delta(A)}{a_3\beta_3}+\frac{\delta}{a_3}\sqrt{\Delta(A)}.
\end{split}
\end{equation*}
By Lemma~\ref{lem32} we have $\Delta(A)=-b_1b_2-b_3\beta_3$, which yields
$$
\zeta_0^2=\frac{-b_3+\delta\sqrt{\Delta(A)}}{a_3}.
$$
Substitution of the found three relations into~(\ref{eq45}) yields
$$
A=B=C=\zeta_0\delta\sqrt{\Delta(A)}\neq 0,
$$
which finally implies
$$
z^2=x^2+y^2+\frac{1}{A}z^4h(z)+O((x^2+y^2)^2),
$$
and the required asymptotic (\ref{assym}) easily follows.
\qed
\end{proof}

\section{Classification of the solutions with isolated singular points}
\label{raz5}

Here we study the local behavior of the gradient of $u(x,y)$
in a small neighborhood of an isolated singularity $m_0$. We show that there can only occur the following three types of singular points:

\begin{itemize}
\item
the 1st type: solution is space-like in a punctured neighborhood of $m_0$;

\item
the 2nd type: any small neighborhood of $m_0$ contains one  space-like and
one  time-like component;

\item
the 3rd type: any small neighborhood of $m_0$ four alternating components: two  space-like and
two  time-like.
\end{itemize}

In order to establish this classification we study the level set $|\nabla u(x,y)|=1$ in a small neighborhood of an isolated nonremovable singular point $(0,0,0)$, where  $z=u(x,y)$ is  the solution~(\ref{eq1}) generated by a matrix $A\in \ZM_3$.
We have
$$1=|\nabla u|^2=\frac{\phi^2\psi'{}^2+\phi'{}^2\psi^2}{\zeta'{}^2},$$
hence
\begin{equation}
\phi'{}^2(x)\psi^2(y)+\phi^2(x)\psi'{}^2(y)-\zeta'{}^2(z)=0.
\label{eq51}
\end{equation}
In order to analyze the latter equation we need the Taylor series for $\phi$, $\psi$, $\zeta$ and its
derivatives. We consider the general function $g(\xi)$ satisfying $g'{}^2(\xi)=P(g^2(\xi))$, where $P(t)=a+2bt+ct^2$ has positive discriminant, and find the Taylor series of $g$ at the point $0$ where $g'(0)=0$. Denote $g_0=g(0)$. Then $g_0^2=(-b+\delta\sqrt{\Delta})/c$,
where $\Delta=b^2-ac$. Then
$$
g_0''=2(b+cg_0^2)g_0=2\delta g_0\sqrt{\Delta}=2\mu g_0,
$$
where $\mu=\delta\sqrt{\Delta(A)}$. We find the higher derivatives $g^{(k)}_0$:
\begin{equation*}
\begin{split}
g_0^{(3)}=2g_0'(b+3cg_0^2)=0,\qquad
g_0^{(4)}=4\mu g_0(b+3cg_0^2).
\end{split}
\end{equation*}
Thus
$$
g(\xi)=g_0+\mu g_0 \xi^2+\frac{g_0\mu(b+3cg_0^2)}{6}\xi^4+o(\xi^4),
$$
hence
\begin{equation}
{g^2(\xi)}=g_0^2\left(1+2\mu \xi^2+\frac{3\mu^2+\mu b+3\mu cg_0^2}{3}\xi^2\right) + o(\xi^4)
\label{eq52}
\end{equation}
By using $g'{}^2(\xi)=P(g^2(\xi))$ we have for the derivative
\begin{equation}
g'{}^2(\xi)=P(g_0^2)+P'(g_0^2)\rho+\frac{1}{2}P''(g_0^2)\rho^2+o(\rho^2),
\label{eq53}
\end{equation}
where $\rho\equiv g^2(\xi)-g_0^2$. Applying the found formulae we obtain
$$
g'{}^2(\xi)=4\mu^2g_0^2\xi^2+
g_0^2\left(2\mu^3+\frac{2}{3}\mu^2(b+9cg_0^2)\right)\xi^4 + o(\xi^4).
$$
Notice that $b+9cg_0^2 = -8b+9\delta\sqrt{\Delta(A)} = 9\mu - 8b$.
Hence,
\begin{equation}
g'{}^2(\xi) = 4\mu^2g_0^2\xi^2\left(1+2(\mu-\frac{2}{3}b)\xi^2\right) + o(\xi^4).
\label{eq54}
\end{equation}
Now  applying (\ref{eq52}) and (\ref{eq54}) to $\phi$, $\psi$ and $\zeta$, and substituting then the resulting formulas into equation (\ref{eq51}), we find after simplification and using $\phi_0^2\psi_0^2=\zeta_0^2$
\begin{equation*}
(1+2\mu x^2+2y^2(\mu+\frac{2}{3}b_2))y^2 +
(1+2\mu y^2+2x^2(\mu+\frac{2}{3}b_1))x^2 -
(1+2z^2(\mu-\frac{2}{3}b_3))z^2 = H,
\end{equation*}
where $H$ contains the terms of order higher than 4.
By Theorem~\ref{th42} we have
$$z^2=x^2+y^2+O((x^2+y^2)^{3/2}),$$
which yields the infinitesimal equation for the gradient level set:
$$
b_3(x^2+y^2)^2+b_2y^4+b_1x^4=0.
$$
We rewrite this in the new notation  $\xi=y^2/x^2$ as
\begin{equation}
\beta_1\xi^2+2b_3\xi+\beta_2=0,
\label{eq56}
\end{equation}
where $\beta_1=b_2+b_3$ and $\beta_2=b_1+b_3$. Notice that the discriminant of the quadratic
equation~(\ref{eq56}) is positive:
$$
4(b_3^2-\beta_1\beta_2)=4\Delta(A)>0.
$$
Hence  the infinitesimal equation of the level lines $|\nabla
z(x,y)|=1$ takes the form
\begin{equation}
(y^2-\xi_1x^2)(y^2-\xi_2x^2)=0,
\label{eq57}
\end{equation}
where $\xi_1$, $\xi_2$ are the roots of the quadratic equation~(\ref{eq56}).

Thus we obtain the following classification.

%\medskip
\begin{table}[h]
\begin{tabular}{|p{110px}|p{110px}|p{110px}|}\hline
1st type&2nd type&3rd type\\\hline
%Solution is space-like is a punctured neighborhood  & one  space-like and
%one  time-like component& two  space-like and
%two  time-like component\\\hline
$\beta_1>0,\; \beta_2>0,\; b_3>0$ & $\beta_1<0,\ \beta_2>0$ &
$\beta_1>0,\; \beta_2>0,\; b_3<0$\\\hline
$\beta_1<0,\; \beta_2<0,\; b_3<0$& &$\beta_1<0,\ \beta_2<0,\ b_3>0$
\\\hline
\end{tabular}
\bigskip
\label{tabb}\caption{Three types of singularities}
\end{table}

\section{Examples of the two-periodic solutions}
\label{raz6}

Here we illustrate the above construction by some examples of the surfaces of each type according to the Table~\ref{tabb}. We denote by $\sn(t;k)$ the Jacobi elliptic sinus of the parameter $k$ defined as
$$
\int_{0}^{\sn(t;k)}\frac{du}{\sqrt{(1-u^2)(1-k^2u^2)}}=t,
$$
and by $\cn(t;k)$ Jacobi cosinus satisfying the relation
$\cn(t;k)=\sqrt{1-\sn^2(t;k)}$. We use the standard convention: $k'=\sqrt{1-k^2}$.
The examples below have discriminant of $\Delta(A)= 1/4$.

\noindent
\textbf{A solution of the 1st type}:
\begin{equation}\label{eq:cncn}
\sn\left(\lambda z;km\right)=
\cn\left(x;\frac{k}{\sqrt{1+k^2}}\right)\cn\left(y;
\frac{m}{\sqrt{1+m^2}}\right),
\end{equation}
where $\lambda=1/(km)'$, $k,m>0$ and $0<km<1$. The denerative matrix
has the form
$$
\left(
\begin{array}{ccc}
\frac{1}{1+k^2} & -\frac{(1+k^2)m^2\lambda^2}{(1+m^2)} & -\frac{k^2}{1+k^2}\\
\frac{1}{1+m^2} & -\frac{(1+m^2)k^2\lambda^2}{(1+k^2)} & -\frac{m^2}{1+m^2}\\
k^2m^2\lambda^2 & \frac{1}{\lambda^2(1+k^2)(1+m^2)} & \lambda^2
\end{array}
\right).
$$

\noindent
\textbf{A space-like surface of the 2nd type}:
\begin{equation}\label{eq:sncn}
\cn(z;\mu km) = \sn\left(\frac{x}{k'};k\right)\cn(y;m),
\end{equation}
where $\mu=1/(k'm)'$, $k,m\in(0,1)$.

The generating matrix
$$
\left(
\begin{array}{ccc}
\frac{1}{k'{}^2} & -(k'm'm\mu)^2 & \frac{k^2}{k'{}^2}\\
m'{}^2 & \frac{k^2\mu^2}{k'{}^2} & -m^2\\
-k^2m^2\mu^2 & \frac{1}{k'{}^2\mu^2} & m'{}^2\mu^2
\end{array}
\right).
$$

\noindent
\textbf{A surface of the 3rd type}:
\begin{equation}\label{eq:snsn}
\sn\left(\lambda z;km\right)=\sn\left(\frac{x}{k'};k\right)
\sn\left(\frac{y}{m'};m\right),
\end{equation}
where $\lambda=1/(km)'$, $k,m\in(0,1)$. The generating matrix
$$
\left(
\begin{array}{ccc}
\frac{1}{k'{}^2} & \frac{\lambda^2k'{}^2m^2}{m'{}^2} & \frac{k^2}{k'{}^2}\\
\frac{1}{m'{}^2} & \frac{\lambda^2k^2m'{}^2}{k'{}^2} & \frac{m^2}{m'{}^2}\\
\lambda^2k^2m^2 & \frac{1}{\lambda^2k'{}^2m'{}^2} & \lambda^2
\end{array}
\right).
$$

%\vspace*{-1.0cm}
\begin{figure}[h]
\includegraphics[height=0.37\textheight,angle=270,keepaspectratio=true]{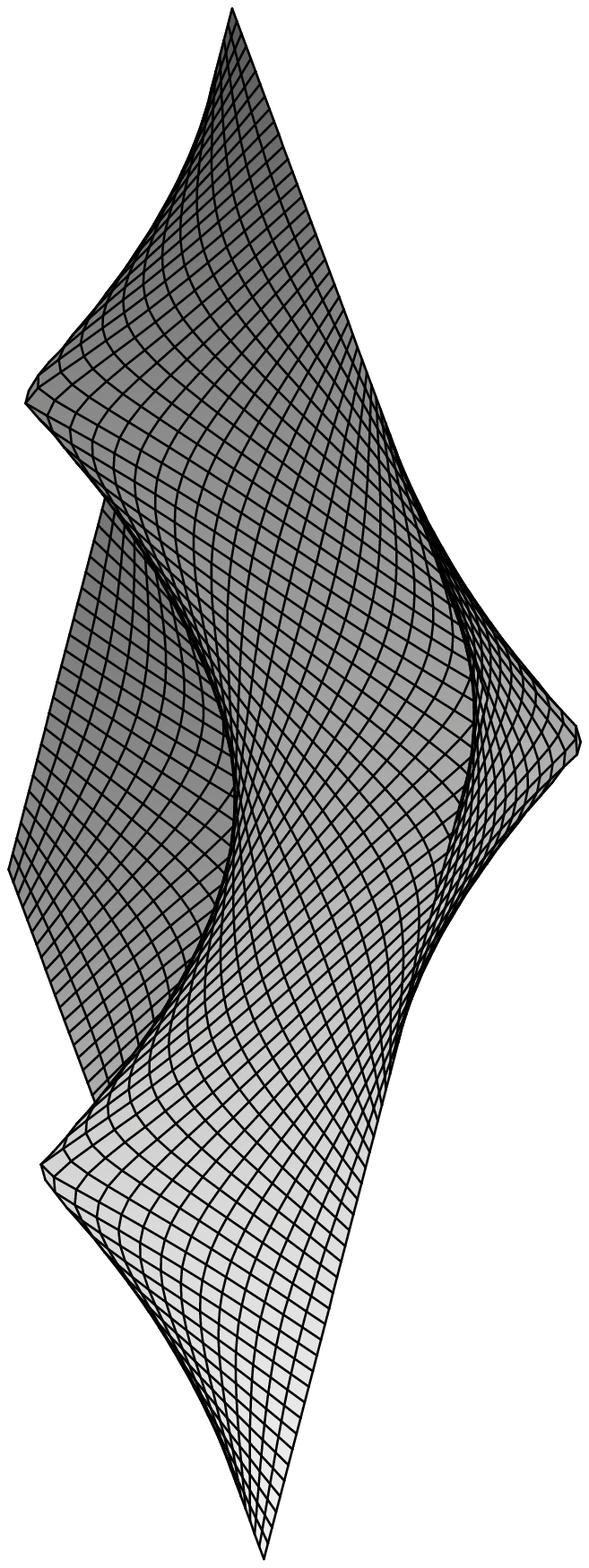}
\includegraphics[height=0.29\textheight,angle=270,keepaspectratio=true]{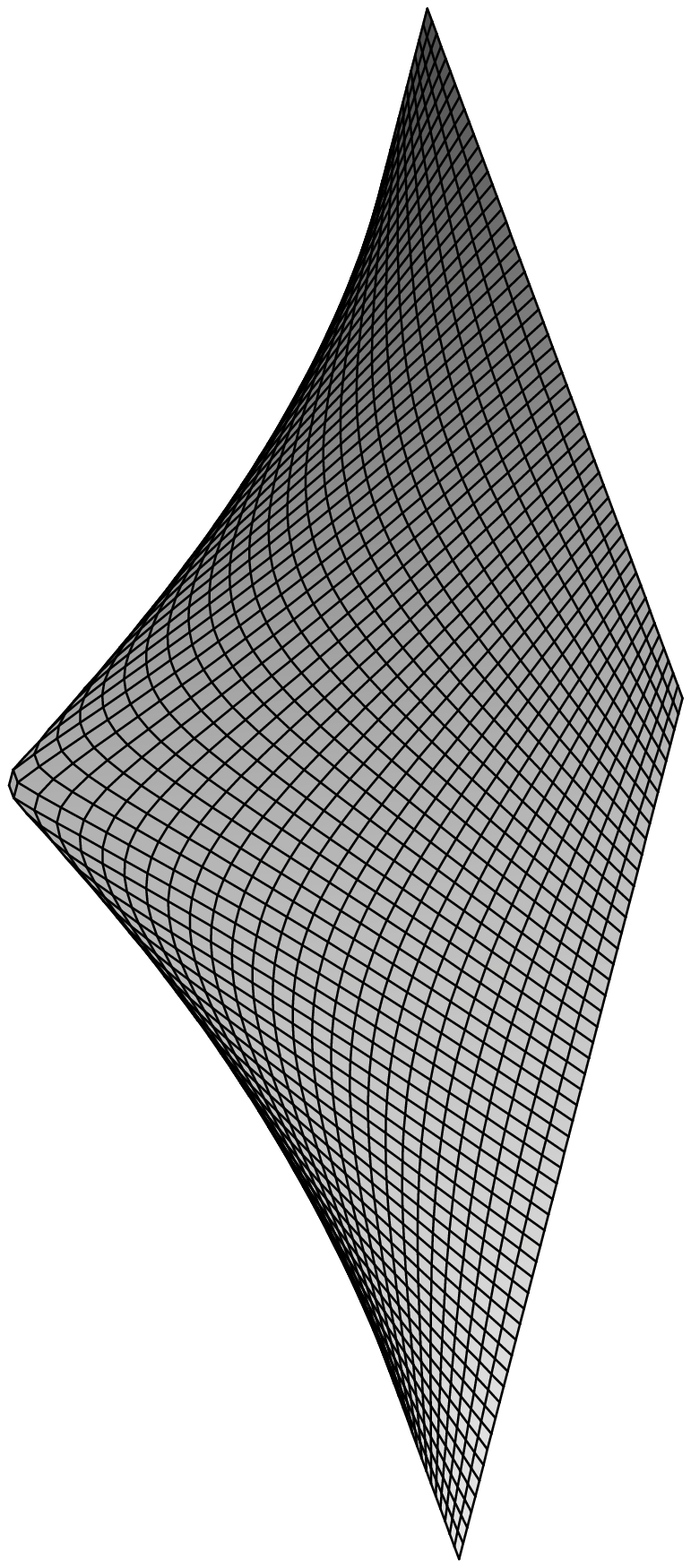}
\vspace*{-0.9cm}
\caption{A surface (\ref{eq:cncn}) for $k=4/5$ and its fundamental domain} \label{fig:snsn}
\end{figure}
%\input twoone
%\settwoone

%\vspace*{-0.3cm}
\begin{figure}[h]
\includegraphics[height=0.37\textheight,angle=270,keepaspectratio=true]{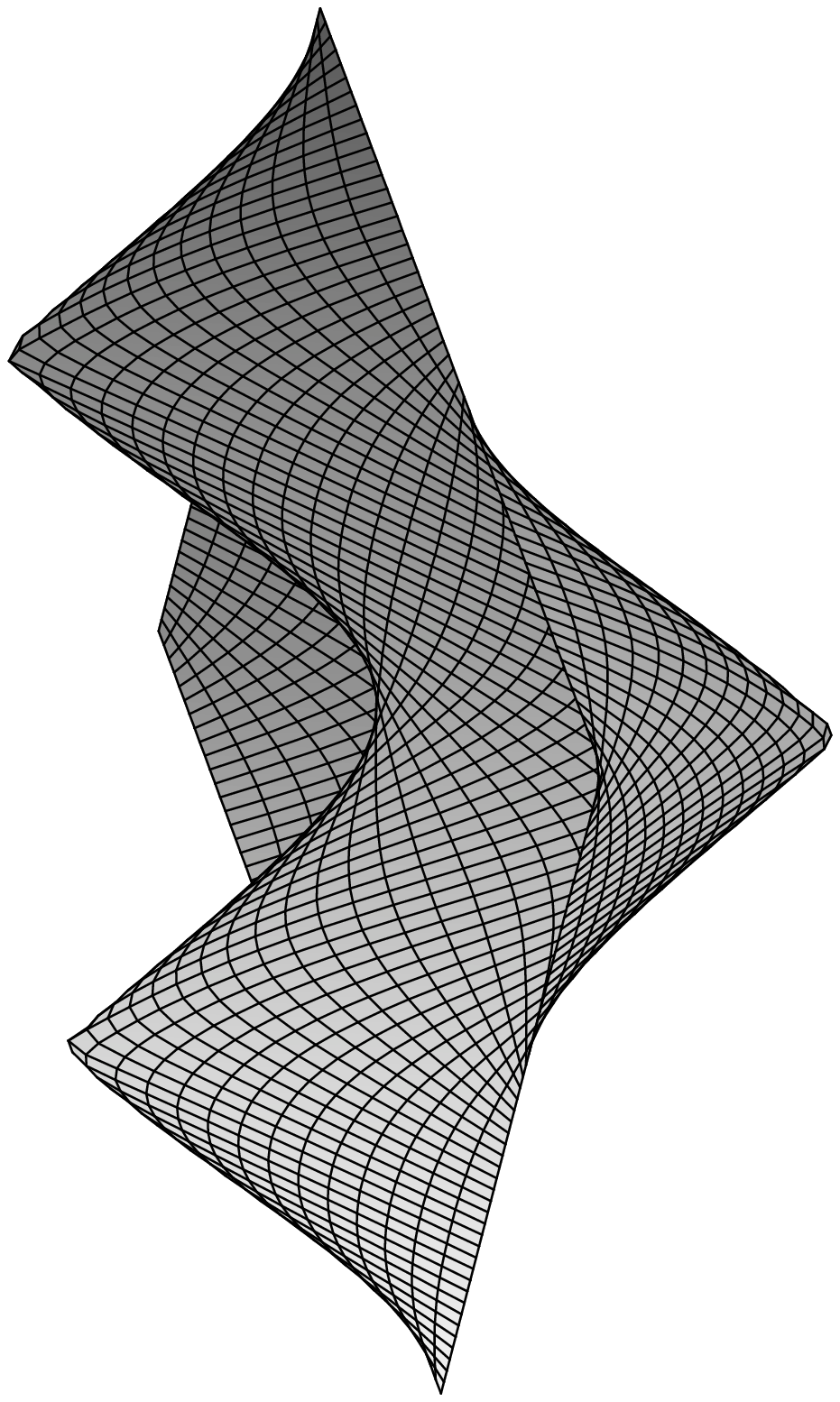}
\includegraphics[height=0.29\textheight,angle=270,keepaspectratio=true]{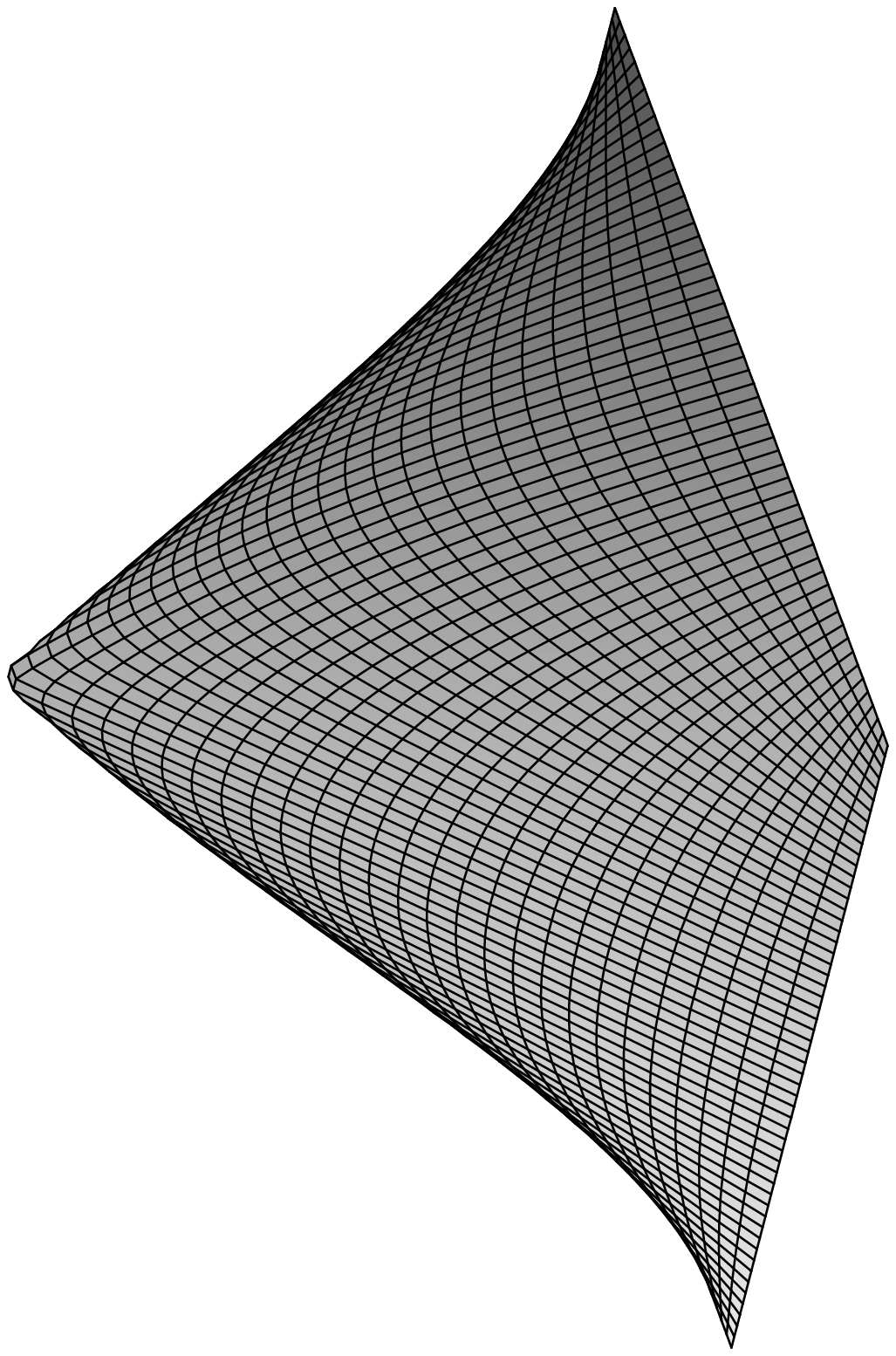}
\vspace*{-0.9cm}
\caption{A surface (\ref{eq:sncn}) for $k=4/5$ and its fundamental domain} \label{fig:snsn}
\end{figure}

%\vspace*{-0.3cm}
\begin{figure}[h]
\includegraphics[height=0.37\textheight,angle=270,keepaspectratio=true]{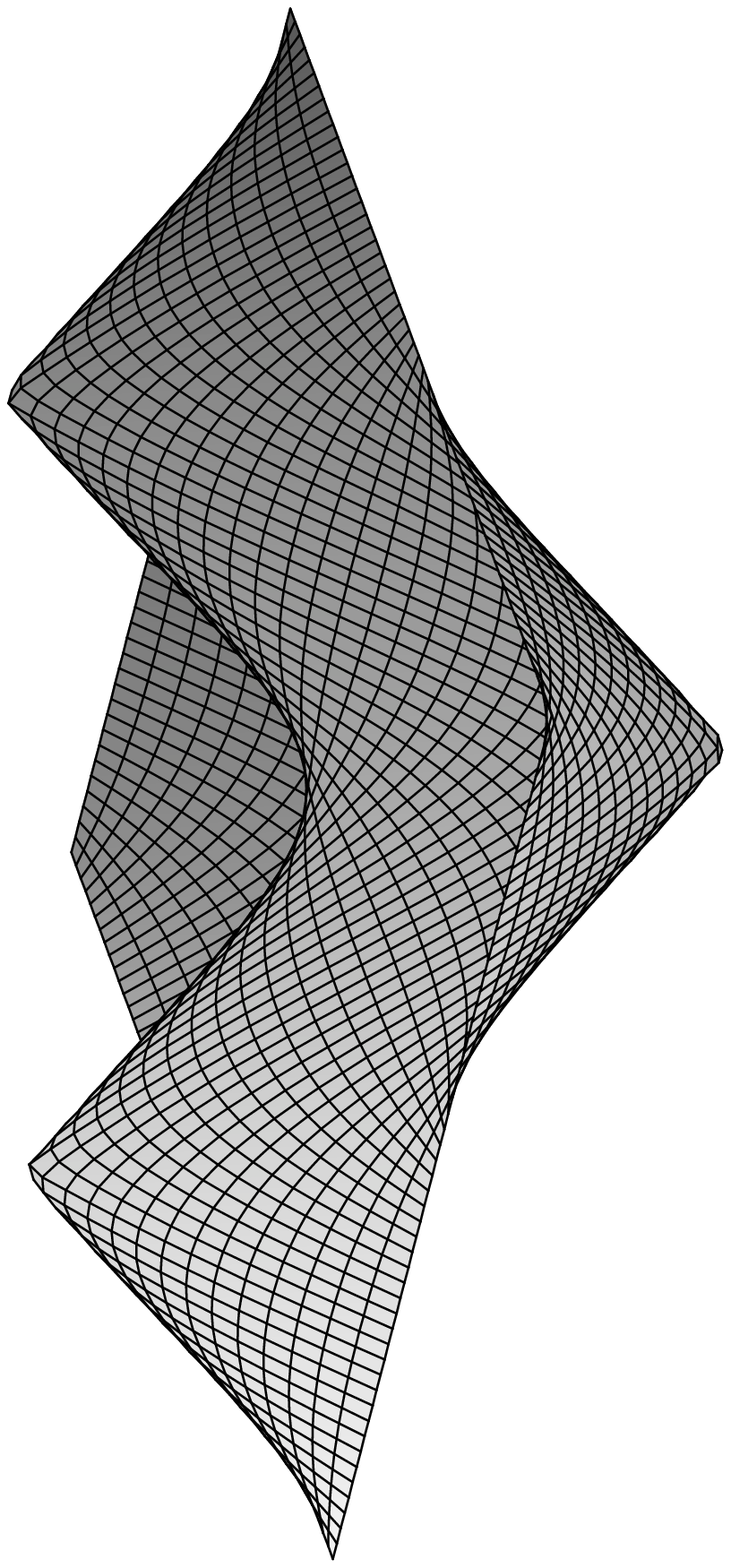}
\includegraphics[height=0.29\textheight,angle=270,keepaspectratio=true]{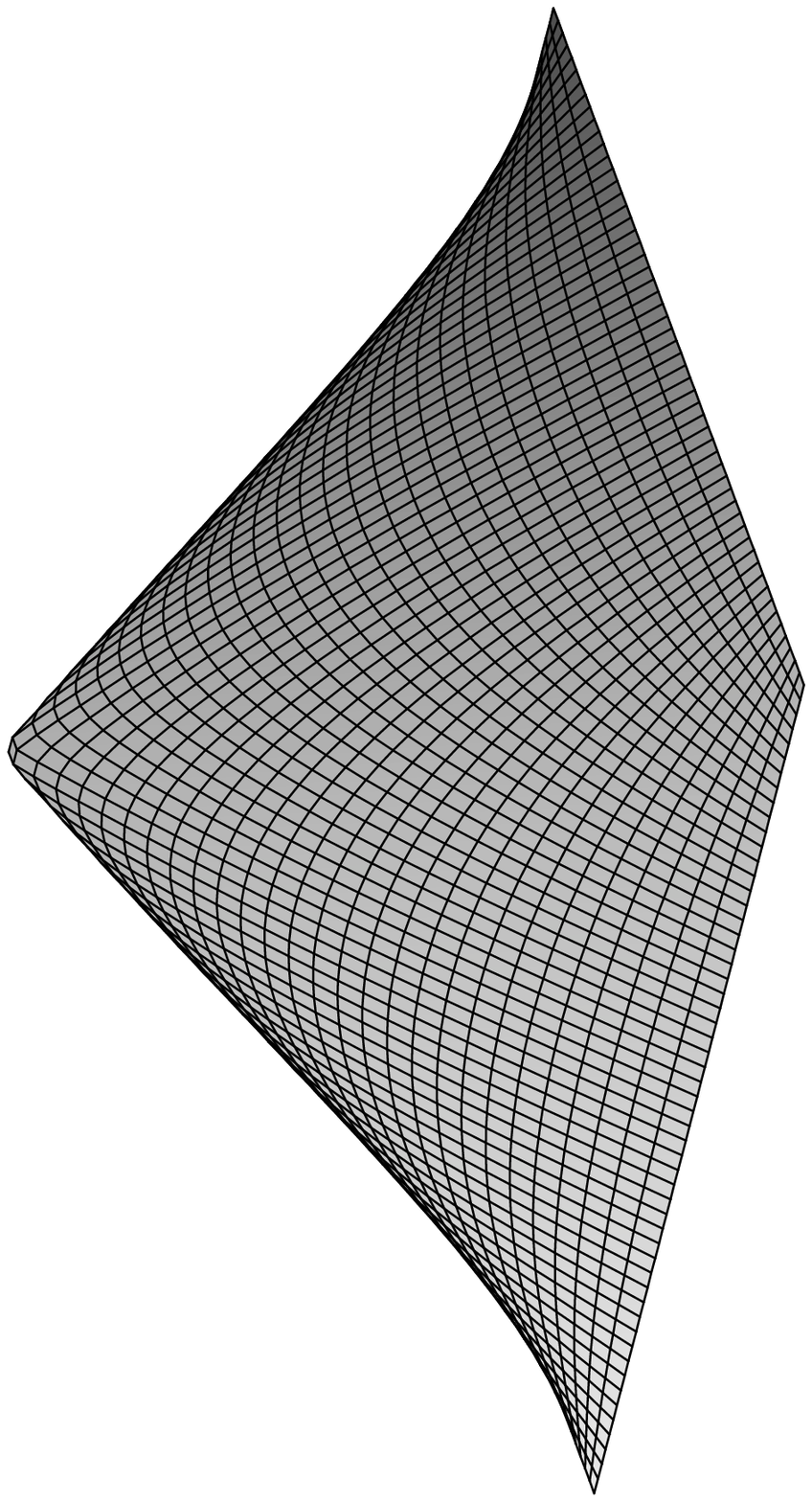}
\vspace*{-0.9cm}
\caption{A surface (\ref{eq:snsn}) for $k=4/5$ and its fundamental domain} \label{fig:snsn}
\end{figure}

\bibliographystyle{amsunsrt}

\end{document}